\newcommand{\mykill}[1]{ }
\newcommand{\thmspace}{\bigskip}
\newtheorem{Th}{Theorem}[section]
\newtheorem{Lemma}[Th]{Lemma}
\newtheorem{Proposition}[Th]{Proposition}
\newtheorem{Corollary}[Th]{Corollary}
\newtheorem{definitionremark}[Th]{Definition/Remark}
\newtheorem{remark}[Th]{Remark}
\newtheorem{ex}[Th]{Example}
\theoremstyle{nonumberplain}
\newtheorem{proof}{Proof}
\newcommand{\R}{\mathbb{R}}
\newcommand{\C}{\mathbb{C}}
\newcommand{\N}{\mathbb{N}}
\renewcommand{\d}{\partial}
\DeclareMathOperator{\W}{W}
\DeclareMathOperator{\Gr}{Gr}
\DeclareMathOperator{\Der}{Der}
\DeclareMathOperator{\id}{id}
\DeclareMathOperator{\ann}{Ann}
\DeclareMathOperator{\lm}{lm}
\DeclareMathOperator{\MPUM}{MPUM}
\DeclareMathOperator{\VMPUM}{VMPUM}
\DeclareMathOperator{\Shift}{Shift}
\DeclareMathOperator{\Sol}{Sol}
\DeclareMathOperator{\HP}{HP}
\DeclareMathOperator{\Syz}{Syz}
\DeclareMathOperator{\Bild}{Im}
\DeclareMathOperator{\const}{const}
\date{}
\title{Exact linear modeling using Ore algebras}
\author{Kristina Schindelar, Viktor Levandovskyy\footnote{Corresponding author. E-Mail: {\tt Viktor.Levandovskyy@math.rwth-aachen.de}}, Eva Zerz\\
Lehrstuhl D f\"ur Mathematik, RWTH Aachen University \\ 52062 Aachen, Germany}
\begin{document}

\maketitle

\begin{abstract}
Linear exact modeling is a problem coming from system identification:
Given a set of observed trajectories, the goal is find a model (usually,
a system of partial differential and/or difference equations)
that explains the data as precisely as possible.
The case of operators with constant coefficients is well studied and known
in the systems theoretic literature, whereas the operators
with varying coefficients were addressed only recently.
This question can be tackled either using Gr\"obner bases for
modules over Ore algebras or by following the ideas from
differential algebra and computing in commutative rings. 
In this paper, we present algorithmic methods to compute ``most powerful
unfalsified models'' ($\MPUM$) and their counterparts with
variable coefficients ($\VMPUM$) for polynomial and
polynomial-exponential signals.  
We also study the structural properties of the
resulting models, discuss computer algebraic techniques behind algorithms
and provide several examples.
\end{abstract}

Key words: Ore algebra, noncommutative Gr\"obner basis, annihilator,
syzygies, linear exact modeling.

MSC 2000 classification: 13P10, 93B25, 68W30, 93A30.

\tableofcontents 

\section{Introduction}
Linear exact modeling is a problem of system identification that leads
to interesting algebraic questions. We start with some motivation
from the systems theoretic point of view:
The problem of linear exact modeling was formulated for one-dimensional 
behaviors in \cite{AW}, see also \cite{KP1,KP2}. Starting with an observed set of 
polynomial-exponential signals, the aim is to find a linear 
differentiation-invariant model for these.
Evidently, the whole signal set is a behavior that is not falsified by 
observation. But such a model has no significance. Making the behavior 
larger than necessary, the accuracy of the explanation decreases. 
So besides the condition that the desired model should be unfalsified, 
we are searching for the most powerful one. This means that the model 
does not admit more solutions than necessary. A model satisfying all 
conditions is abbreviatory called continuous $\MPUM$ 
(most powerful unfalsified model).

In \cite{Eva1}, the modeling was extended to multidimensional behaviors 
\cite{CQR,PQ},  
and in \cite{Eva2} to the discrete framework, that is, instead of the 
requirement that the model should contain all derivatives of the signals, 
it is required that all shifts of the signals are contained.

In other words, the problem is to find a homogeneous system of partial differential 
equations with constant coefficients that is as restrictive as possible with the property 
of possessing the observed signals as solutions.

In \cite{VMPUM} a different approach was introduced.
There the goal is to find all partial differential equations with 
polynomial coefficients that are solved by the signals. 
Thus the new aspect of this approach is the choice of a different model class.
Indeed the properties of the resulting model depend strongly 
on the model class. For instance, by the 
transition from the $\MPUM$ to the $\VMPUM$, the time-invariance vanishes.
In this paper, we continue this approach. But since the continuous case is 
not the only interesting one, we will consider 
a more general problem comprising 
both the continuous and the discrete situation.
Later some special model classes will be discussed in more detail.

Let us particularize our goal. 
Let $K$ be a field and $O$ be an operator algebra over $K$. Further 
let $\mathcal{A}_O$ be a function space over $K$ possessing an $O$-module structure.
A \textbf{model} or a so-called \textbf{behavior} $\mathcal{B}$ is the solution
set 
of a homogeneous linear system, given by finitely many equations. These equations are defined in terms of
the operator algebra $O$. Thus $\mathcal{B}$ is characterized by
$$ \mathcal{B}=\Sol(O^{1\times r}R)= \lbrace \omega \in \mathcal{A}_O^m \, | \, R \bullet \omega= 0 \rbrace, \;\;\; 
\mbox{ where } \; R\in O^{r \times m}$$
and $\bullet$ denotes the natural extension of the module action $o \bullet \omega$ of 
$o \in O$ on $\omega \in \mathcal{A}_O$ to the matrix $R \in O^{r \times m}$ and the vector
$\omega \in \mathcal{A}_O^m$.
In most cases of interest, we have $K\subseteq O$ and $o k = k o$ for all $k \in K$, $o \in O$.
Then $\mathcal{B}$ is a $K$-vector space, and thus the introduced model class is linear.
Within such a model class we want to perform modeling now. Suppose we observe a set of signals 
$\Omega \subseteq \mathcal{A}_O^m$. The aim is to find a model $\mathcal{B}_\Omega$ in the model class such that
\begin{enumerate}
 \item $\mathcal{B}_\Omega$ is unfalsified by $\Omega$, i.e. \ $\Omega \subseteq \mathcal{B}_\Omega$.
 \item $\mathcal{B}_\Omega$ is most powerful, i.e.\ for every behavior $\mathcal{B}$ with
$\Omega \subseteq \mathcal{B}$, it follows that $\mathcal{B}_\Omega \subseteq \mathcal{B}$.
\end{enumerate}
If $\mathcal{B}_\Omega$ is invariant under the action of $O$, that is,
if we have for all $o \in O$ 
$$ \omega \in \mathcal{B} \;\; \Rightarrow  \;\; o \bullet \omega \in \mathcal{B},$$ 
it is called \textbf{most powerful unfalsified model}, 
short $\MPUM$ of $\Omega$. Else, if $\mathcal{B}_\Omega$ varies under $O$ it is called 
\textbf{variant most powerful unfalsified model}, short $\VMPUM$ of $\Omega$. 
We denote the $\VMPUM$ of $\Omega$ by $\mathcal{B}_\Omega^V$.

The following example shows how the choice of the model class affects the model.

\begin{ex}\label{MotivatingExample}
Consider the signal set consisting of a single signal 
$$\Omega= \lbrace \omega \rbrace, \;\;\; \mbox{where } \; \omega(t)=t \;  \mbox{ for all  } t \in \R .$$ 
\begin{enumerate}
\item Let $O=\C[\partial]$ and $\mathcal{A}_O=\mathcal{C}^\infty(\R,\C)$, where 
$\partial \bullet f := \frac{df}{dt}$. Using the commutative structure of the operator ring, the underlying system is invariant under 
differentiation:
$$ R \bullet w = 0 \;\;\; \Rightarrow \;\;\; R ( \partial \bullet w ) = 
( R \partial ) \bullet w =  ( \partial R ) \bullet w
=   \partial ( R \bullet w) =0.$$
Since we are searching for a differentiation-invariant model, we obtain that besides $\omega$, also 
its derivative, the constant function $1$, belongs to $\mathcal{B}_{\Omega} $. Using that the model is $\C$-linear, we get that
$$\mathcal{B}_{\Omega} = \{ w  \ | \ \exists a,b \in \C: \forall t\in \R : w(t)=at+b \}.$$
An element $w \in  \mathcal{C}^\infty(\R,\C)$ is contained in $\mathcal{B}_\Omega$ if and only if
$$\partial^2 \bullet w=0,$$
i.e. the $\MPUM$ is specified by a single ordinary differential equation with constant coefficients.
\item Now let $O=\C[t]\langle \partial \rangle$, where 
$\partial \bullet f := \frac{df}{dt}$ and $\mathcal{A}_O$ is defined as above. We want to describe 
$\omega$ as a solution of homogeneous ordinary differential equations with polynomial coefficients. The 
equations $$\partial^2 \bullet w=0 \;\; \mbox{ and } \;\; t \partial \bullet  w-w=0 $$
are satisfied by $\omega$. We will see later that these two generate a kernel representation of the $\VMPUM$ of $\Omega$.
The corresponding solution space equals $$\mathcal{B}_\Omega^V=\{ w \ | \ \exists a \in \C : \forall t\in{\R} : w(t)=at\}.$$
Notice that this example demonstrates the variance under $\partial$, since we have
$ \partial \bullet \omega \notin \mathcal{B}_\Omega^V$.
Another property that should be pointed out is that the $\VMPUM$ yields a more
precise description of $\Omega$ than $\MPUM$.
\end{enumerate}
\end{ex}

\section{Ore algebras}

The example above deals with continuous signals. But in applications, there are also 
discrete phenomena or combinations of discrete and continuous signals that are of great interest too.
Many of the relevant operator algebras have the structure of an Ore algebra, as studied e.g. in
\cite{CQR, CQR2, CS}. 
We give a definition that is motivated by \cite{CS}. Moreover, this simplifies more
general setup of \cite{Kr}.

Hence first consider skew polynomial rings, a generalization of polynomial rings to 
the noncommutative framework. 

\begin{definitionremark} \cite{MR} \
\label{defOre}
\begin{enumerate} 
\item[(1)] Let $A$ be a ring and $\sigma : A \rightarrow A$ be a ring endomorphism.
\begin{enumerate} 
\item The map $\delta : A \rightarrow A$ is called a $\sigma$-\textbf{derivation} if it is 
$K$-linear and satisfies the skew Leibniz rule
\begin{align}\label{deltaOperation}
\delta(a b) = \sigma(a) \delta(b) + \delta(a) b \;\;\;\; \mbox{for all } a, b \in A.
\end{align}
\item For a $\sigma$-derivation $\delta$, the ring $A\left[ \partial;\sigma, \delta \right] $ 
which
consists of all polynomials in $\partial$ with coefficients in $A$ with the usual addition 
and a product defined by the commutation rule
$$\partial a = \sigma(a) \partial + \delta(a) \;\;\;\; \mbox{for all } a \in A,$$
is called a \textbf{skew polynomial ring} or an \textbf{Ore extension} of $A$ with $\sigma$ and $\delta$.
\end{enumerate}
\item[] If $A$ is a domain and $\sigma$ is injective, the 
skew polynomial ring $A\left[ \partial;\sigma, \delta \right]$ is a domain by degree arguments. 
Then the definition can be iterated to the so-called Ore algebras \cite{CS}.
\item[(2)] Let $A=K[t_1, \dots, t_n]$. An iterated skew polynomial ring 
$$O=K[t_1, \dots, t_n] [\partial_1;\sigma_1, \delta_1]  \cdots [\partial_s;\sigma_s, \delta_s]$$ 
is called a (polynomial) \textbf{Ore algebra} if the $\sigma_i$'s and $\delta_j$'s commute for $ 1 \leq i , j \leq s $, 
the $\partial_i$'s commute with $\partial_j$'s and further for all $1\leq i \leq s$ the map $\sigma_i:O \rightarrow O$ is 
an injective $K$-algebra endomorphism and $\delta_i:O \rightarrow O $ is a $\sigma_i$-derivation satisfying
$ \sigma_i( \partial_j)= \partial_j \;\;\;\; \mbox{ and }\;\;\;\; \delta_i(\partial_j)=0. $
\end{enumerate}
\end{definitionremark}

Using multi-index notation, every element of an Ore algebra can be expressed into the normal form
$$
\sum_{\alpha \in \N_0^s} p_\alpha \partial^\alpha  = 
\sum_{\alpha \in \N_0^s} p_\alpha {\partial_1}^{\alpha_1} \cdot \ldots \cdot {\partial_s}^{\alpha_s}  
\;\;\;\; \mbox{ where } \;\; p_\alpha \in A.
$$

For our issues the most interesting examples of Ore algebras are the following ones.

\begin{ex}\label{interestingAlgebras} 
Let $n=1$, thus $A=K[t]$. The algebras can be iterated to $n \in \N$.
\begin{enumerate}
\item The first \textbf{Weyl algebra} is defined by
$\W_1:= A[ \partial; \id_{ \W_1}, \frac{\partial}{\partial t}]$
with the commutation rule $ \partial  t = t \partial + 1 .$
\item The first \textbf{difference algebra} is defined by
$ \mathcal{S}_1 :=A \left[ \Delta; \sigma, \delta \right]$,
where $ (\sigma p)(t) =  p(t + 1)$ and $\delta(p)=\sigma(p)-p$ for all 
$p \in \mathcal{S}_1$. The commutation rule is
$ \Delta t = t \Delta + \Delta + 1.$
\item The following Ore algebra is a combination of the first and second one. Define
$\mathcal{SW}_1:=A \left[ \Delta; \sigma_1, \delta_1 \right]\left[ \partial; \sigma_2, \delta_2 \right]$,
where $\sigma_2:=id_{ \mathcal{SW}_1 }$, $\delta_2:=\frac{\partial}{\partial t}$ and $ (\sigma_1 p)(t) = p(t + 1)$, 
$\delta_1(p)=\sigma_1(p)-p$ for all $p \in \mathcal{SW}_1$. Then $ \partial t = t \partial + 1$,
$\Delta t = t \Delta + \Delta +1$ and $\partial \Delta = \Delta \partial$.
\item Suppose $q$ to be a parameter. The first \textbf{continuous $q$-difference algebra} is defined by
$ \mathcal{Q}:= A [\partial; \sigma, \delta], $
where $\sigma( p )=p(q t)$ and $\delta(p)= p(qt)-p(t)$. We obtain the commutation rule
$ \partial t = qt \partial + (q-1)t$.
\end{enumerate}
\end{ex}


\begin{Lemma}
\label{OreIso}
Let $A$ be a ring, and $A[\partial; \sigma,\delta]$ be an Ore extension of $A$. 
For any $\alpha \in A$ there exists an Ore extension $A[\Delta_{\alpha}; \sigma,\delta']$
with $\delta'(a) = \sigma(a) \alpha - \alpha a + \delta(a)$, such that 
$A[\partial; \sigma,\delta] \cong A[\Delta_{\alpha}; \sigma,\delta'] $ as rings.
\end{Lemma}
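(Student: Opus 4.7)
The plan is to construct the isomorphism explicitly by setting $\Delta_\alpha := \partial - \alpha$ inside $A[\partial;\sigma,\delta]$ and verifying that the defining commutation relation of an Ore extension holds for $\Delta_\alpha$ with the prescribed $\delta'$.

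First I would check that $\delta'(a) := \sigma(a)\alpha - \alpha a + \delta(a)$ really is a $\sigma$-derivation, so that the Ore extension $A[\Delta_\alpha;\sigma,\delta']$ exists in the sense of Definition/Remark~\ref{defOre}. $K$-linearity is immediate because $\sigma$ and $\delta$ are $K$-linear and $\alpha$ commutes with scalars. The skew Leibniz rule requires expanding both sides of $\delta'(ab) = \sigma(a)\delta'(b) + \delta'(a)b$; on the right-hand side the two mixed terms $\sigma(a)\alpha b$ cancel, and the remaining terms reduce to $\sigma(a)\sigma(b)\alpha - \alpha ab + \sigma(a)\delta(b) + \delta(a)b$, which is exactly $\delta'(ab)$ computed from the Leibniz rule for $\delta$.

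Next I would define the candidate isomorphism
$\varphi : A[\Delta_\alpha;\sigma,\delta'] \to A[\partial;\sigma,\delta]$
to be the identity on $A$ and to send $\Delta_\alpha \mapsto \partial - \alpha$, extended to polynomials in $\Delta_\alpha$ in the obvious way. The key computation is that in $A[\partial;\sigma,\delta]$,
\[
(\partial - \alpha)\,a = \sigma(a)\partial + \delta(a) - \alpha a
= \sigma(a)(\partial - \alpha) + \bigl(\sigma(a)\alpha - \alpha a + \delta(a)\bigr),
\]
so $\partial - \alpha$ satisfies exactly the commutation rule $\Delta_\alpha\, a = \sigma(a)\Delta_\alpha + \delta'(a)$ required on the source side. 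By the universal property of Ore extensions this extends $\varphi$ to a well-defined ring homomorphism.

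Finally, bijectivity: the analogous map $\psi : A[\partial;\sigma,\delta] \to A[\Delta_\alpha;\sigma,\delta']$ sending $\partial \mapsto \Delta_\alpha + \alpha$ is well-defined by the same kind of computation in the other direction, and one checks $\varphi\circ\psi = \id$ and $\psi\circ\varphi = \id$ on the generating set $A \cup \{\partial\}$, respectively $A \cup \{\Delta_\alpha\}$. The main (minor) obstacle is really just the bookkeeping in verifying the skew Leibniz rule for $\delta'$; once that is in place, the rest is a direct application of the universal property of the skew polynomial ring.
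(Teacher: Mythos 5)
Your proof is correct and follows essentially the same approach as the paper: set $\Delta_\alpha := \partial - \alpha$, verify the commutation relation and that $\delta'$ is a $\sigma$-derivation, and conclude by exhibiting the ring isomorphism fixing $A$. You are in fact slightly more careful than the paper in spelling out why the map is a well-defined ring homomorphism (via the universal property) and why it is bijective (via the explicit inverse $\partial \mapsto \Delta_\alpha + \alpha$), whereas the paper simply asserts the isomorphism; your version is the cleaner write-up.
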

\begin{proof}
For all $a \in A$, the equality $\partial a = \sigma(a) \partial + \delta(a)$ holds. For $\alpha \in A$ define
$\Delta_{\alpha} := \partial - \alpha$. Then it obeys the relation 
$\Delta_{\alpha} a = \sigma(a) \Delta_{\alpha} + \sigma(a) \alpha - \alpha a + \delta(a) 
= \sigma(a)\Delta_{\alpha} + \delta'(a)$.
The map $\delta'$ is linear and it is a $\sigma$-derivation since
\[
\delta'(ab) = \sigma(a) \sigma(b) \alpha - \sigma(a) \alpha b + \sigma(a) \delta(b) -
\sigma(a) \alpha b - \alpha ab + \delta(a)b = 
\] 
\[
\sigma(a) \sigma(b) \alpha - \alpha ab + \sigma(a) \delta(b) - \delta(a)b = 
\sigma(ab) \alpha - \alpha ab + \delta(ab).
\]
Define the ring homomorphism
$\varphi_{\alpha}: A[\partial; \sigma,\delta] \to A[\Delta_{\alpha}; \sigma,\delta']$,
$\varphi_{\alpha}(a)=a$ for all $a \in A$, $\varphi_{\alpha}(\partial)=\Delta_{\alpha}=\partial - \alpha$.
Then $\varphi_{\alpha}$ is an isomorphism.
\end{proof}

Let 
$O:=
A[\partial_1; \sigma_1,\delta_1]\cdots[\partial_m; \sigma_m,\delta_m]$ be an Ore algebra.
 With the action
$$
\partial_i \bullet p := \delta_i(p)  \;\;\; \mbox{ and } \;\;\; a \bullet p :=a \cdot p \;\;\; \mbox{ for all } p\in A \mbox{ and } a \in A
$$
the $K$-algebra $A$ becomes an $O$-module. For this, we have to show that 
\begin{enumerate}
 \item $ (o_1 \cdot o_2) \bullet p = o_1 \bullet ( o_2 \bullet p ) \;\;\;\; $ for all $o_1, o_2 \in O$ and $p \in A$ 
 \item $ (o_1 + o_2) \bullet p = o_1 \bullet p +  o_2 \bullet p \;\;\;\; $ for all $o_1, o_2 \in O$ and $p \in A$ 
 \item $ o \bullet (p+q) = o \bullet p + o \bullet q \;\;\;\; $ for all $o \in O$ and $p, q \in A.$ 
\end{enumerate}
To show 1. it suffices to consider $o_1=a \partial_i$ and $o_2= b \partial_j$ with $a, b \in A$. Then
\begin{eqnarray*}
 (o_1 \cdot o_2) \bullet p & = & (\, a( \, \sigma_i(b) \partial_i + \delta_i(b)\,) \partial_j \,) \bullet p 
 =  (\, a\sigma_i(b) \partial_i \partial_j + a\delta_i(b) \partial_j \,) \bullet p \\
& = & a\sigma_i(b) \delta_i(p) \delta_j(p) + a\delta_i(b) \delta_j(p)
 =  a \, \delta_i (b\delta_j(p))
 =  a \partial_i \bullet (\, b \partial_j \bullet p \,)\\
& = &  o_1 \bullet (\, o_2 \bullet p \,).
\end{eqnarray*}
The equality in 2. and 3. holds by similar arguments. \\

Using this action, we can define the kernel of a linear operator $f$ from the Ore algebra $O$ over a ring $A$ to be $\ker_{A} f := \{ a\in A \mid f \bullet a = 0\}$, which is a $K$-vector space.

\begin{Lemma}
\label{CorDelta}
Let $K$ be a field, $A$ be a $K$-algebra, $\partial$ be a $K$-linear operator, acting on $A$ and $B = A[\partial; \sigma, \delta]$ be the corresponding operator algebra (that is, for all $a \in A$ we have $ \partial a = \sigma(a) \partial + \delta(a)$). Then the following holds:

(i) $\ker_A \partial = A \Leftrightarrow \delta = 0 \Leftrightarrow B = A[\partial; \sigma, 0]$.

(ii) If $\ker_A \partial = A$, then we have for $\Delta := \partial - 1$:
$A[\partial; \sigma, 0]$ is isomorphic as $K$-algebra to operator algebra $A[\Delta; \sigma, \delta']$ with $\delta' := \sigma - 1$. Moreover, 
$\ker_A \Delta = \{a\in A\mid \sigma(a) = a\} = \const_{\sigma} A \subseteq A$ with the
equality if and only if $A$ is invariant under $\sigma$, what is the case if $\sigma = 1_A$.
\end{Lemma}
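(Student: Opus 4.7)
The plan is to reduce everything to the action of $\partial$ on $A$ as defined just above the statement, namely $\partial \bullet p = \delta(p)$, and then invoke Lemma \ref{OreIso} for part (ii).

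For part (i), I would begin by noting that by the definition of the $O$-module structure on $A$, for every $a \in A$ we have $\partial \bullet a = \delta(a)$. Hence $\ker_A \partial = \{a \in A \mid \delta(a) = 0\} = \ker \delta$. The equivalence $\ker \delta = A \Leftrightarrow \delta = 0$ is immediate from the fact that $\delta$ is a $K$-linear map, and $\delta = 0 \Leftrightarrow B = A[\partial;\sigma,0]$ is just the definition of the skew polynomial ring $B$. So (i) is essentially a translation of definitions; no real difficulty here.

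For part (ii), assume $\ker_A \partial = A$, so by (i) we have $\delta = 0$ and $B = A[\partial;\sigma,0]$. I would apply Lemma \ref{OreIso} with $\alpha = 1 \in A$. This produces $\Delta_1 = \partial - 1$ and a new derivation $\delta'$ determined by $\delta'(a) = \sigma(a)\cdot 1 - 1\cdot a + \delta(a) = \sigma(a) - a$, so $\delta' = \sigma - 1$. Lemma \ref{OreIso} then yields the $K$-algebra isomorphism $A[\partial;\sigma,0] \cong A[\Delta;\sigma,\delta']$ (writing $\Delta$ for $\Delta_1$). For the kernel statement, using again the module action $\Delta \bullet a = \delta'(a) = \sigma(a) - a$, we get
\[
\ker_A \Delta \;=\; \{a \in A \mid \sigma(a) - a = 0\} \;=\; \{a \in A \mid \sigma(a) = a\} \;=\; \const_{\sigma} A,
\]
which is the claimed subset of $A$.

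Finally, equality $\ker_A \Delta = A$ holds iff every $a \in A$ satisfies $\sigma(a) = a$, i.e.\ iff $\sigma = 1_A$, which is the condition in the statement. The only subtlety in the whole proof is bookkeeping: one has to be careful that the isomorphism in Lemma \ref{OreIso} sends the new generator $\Delta_{\alpha}$ exactly to $\partial - \alpha$, so that the action of $\Delta$ on $A$ is indeed $\delta'$ and not some twisted version. Once this is observed, the computation of $\ker_A \Delta$ is a one-liner, and this is the main (very minor) point I would be explicit about.
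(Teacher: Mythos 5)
The paper itself gives no proof for this Lemma — it is treated as a direct consequence of the module action $\partial \bullet a = \delta(a)$ together with Lemma \ref{OreIso} — so there is no official argument to compare against. Your proof is correct and is surely the intended one: part (i) is just unwinding $\ker_A\partial = \ker\delta$, and part (ii) is Lemma \ref{OreIso} specialized at $\alpha = 1$, with $\delta'(a) = \sigma(a) - a$ once $\delta = 0$; since $\varphi_1$ fixes $A \supseteq K$ pointwise, the ring isomorphism is automatically a $K$-algebra isomorphism.

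One place where your ``bookkeeping'' remark should be stated more sharply, since it is where the actual content of the Lemma lives: the isomorphism $\varphi_1 : A[\partial;\sigma,0] \to A[\Delta;\sigma,\delta']$ does \emph{not} carry one $A$-module structure to the other. Inside $B = A[\partial;\sigma,0]$, the element $\Delta = \partial - 1$ acts on $A$ by $a \mapsto \delta(a) - a = -a$, whose kernel is $\{0\}$, not $\const_\sigma A$. The action $\Delta \bullet a = \delta'(a) = \sigma(a) - a$ is the one that $A$ carries as a module over the \emph{new} Ore extension $A[\Delta;\sigma,\delta']$, declared afresh by the general rule $\partial_i \bullet p := \delta_i(p)$; this is the action $\ker_A\Delta$ refers to. That deliberate change of module structure — replacing an operator acting trivially by one acting nontrivially — is exactly what the remark following the Lemma records (``we pass to the new setting of operators, whose action is nontrivial''). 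Once this is said explicitly, your computation of $\ker_A\Delta = \const_\sigma A$ and the equality criterion are immediate, as you observe.
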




\begin{remark}
Using Lemmata \ref{OreIso} and \ref{CorDelta}, we pass to the new setting of operators, 
which action $\bullet$ is nontrivial on $A$. We call such an operator nontrivial and
from now on, we work with such operators only.
\end{remark}

\begin{ex}
Consider the two most important operator algebras, built from operators having zero kernels.
The first \textbf{forward shift algebra} is defined by
$K[t] \left[ s; \sigma, 0 \right]$ with $(\sigma f)(t) =  f(t + 1)$ for all $f \in K[t]$.
The commutation rule is $s t = t s + s$. There is a natural operator associated to $s$, namely the difference operator $\Delta = s-1$, already defined in \ref{interestingAlgebras}, obeying the relation $\Delta t = t \Delta + \Delta + 1$. Applying Lemma \ref{CorDelta}, we see by degree argument, that $\ker \Delta = K$ and the two algebras are isomorphic both as Ore extensions and $K$-algebras. \\
Let $q$ be transcendental over $K$. Then the first \textbf{$q$-commutative algebra} (or Manin's quantum plane) is defined as $K_q[x,y]:=K(q)[x][\partial;\sigma,0]$ with $(\sigma f)(x)=f(qx)$ for $f\in K[t]$. Again, there is a natural $q$-difference operator $\Delta_q := \partial - 1$ and the corresponding operator algebra has been already described in 
\ref{interestingAlgebras} as the first  continuous $q$-difference algebra. Its commutation rule reads as $\partial t = qt \partial + (q-1)t$.
\end{ex}

For $o_1, \dots, o_k \in O^{n}$, we denote by ${}_O \langle o_1, \dots o_k \rangle$ the left submodule of $O^n$, generated by $o_1, \dots, o_k$.
\begin{Th}\label{IsoOrePoly}
Let $O$ be an Ore $A$-algebra, built from operators $\partial_1, \dots, \partial_s$ which have non-zero kernels. Then there is an isomorphism of left $O$-modules 
$$ O/ {}_O \langle \partial_1, \dots, \partial_s \rangle \cong A. $$
\end{Th}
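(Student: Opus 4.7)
The plan is to construct the evaluation map $\varphi : O \to A$ defined by $\varphi(o) := o \bullet 1$, where $1$ is the multiplicative identity of $A$ and $\bullet$ is the left $O$-module action on $A$ constructed just before Lemma~\ref{CorDelta}. The associativity $(o_1 o_2) \bullet p = o_1 \bullet (o_2 \bullet p)$ immediately makes $\varphi$ into a left $O$-module homomorphism, and $\varphi(a) = a \bullet 1 = a$ for every $a \in A$ shows surjectivity. Identifying $\ker \varphi$ with ${}_O \langle \partial_1, \dots, \partial_s \rangle$ will then yield the desired isomorphism via the first isomorphism theorem.

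The key algebraic input is the identity $\delta_i(1) = 0$, which follows at once from the skew Leibniz rule applied to $1 = 1 \cdot 1$: one obtains $\delta_i(1) = \sigma_i(1)\,\delta_i(1) + \delta_i(1) = 2\,\delta_i(1)$. Translated through the action, this reads $\partial_i \bullet 1 = 0$ for every $i$, and in particular realises the hypothesis that each $\partial_i$ has non-zero kernel on $A$. An application of associativity then gives $\varphi(o\partial_i) = o \bullet (\partial_i \bullet 1) = 0$ for every $o \in O$, so that ${}_O \langle \partial_1, \dots, \partial_s \rangle \subseteq \ker \varphi$.

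For the reverse inclusion I would employ the Ore normal form $o = \sum_{\alpha \in \N_0^s} p_\alpha \partial^\alpha$ with $p_\alpha \in A$, whose existence and uniqueness is standard for iterated Ore extensions. For any $\alpha \neq 0$ I pick an index $i$ with $\alpha_i \geq 1$ and, using the pairwise commutativity of the $\partial_j$'s built into the Ore algebra definition, rewrite $\partial^\alpha = \partial^{\alpha - e_i}\partial_i$. Consequently $\partial^\alpha \bullet 1 = \partial^{\alpha - e_i} \bullet (\partial_i \bullet 1) = 0$, which collapses $\varphi(o)$ to $p_0$; thus $o \in \ker\varphi$ forces $p_0 = 0$, and applying the same rewriting term by term exhibits $o$ as a left $O$-linear combination of $\partial_1, \dots, \partial_s$. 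The only non-formal ingredient is the uniqueness of the normal form, which lets us read off $p_0$ unambiguously as the value of $\varphi$; beyond that the argument is a bookkeeping of the module action and of the commutation $\partial^\alpha = \partial^{\alpha-e_i}\partial_i$.
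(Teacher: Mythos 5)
Your proof is correct and takes essentially the same route as the paper: both define the evaluation map $\varphi(o) = o \bullet 1$, use $\delta_i(1)=0$ to reduce $\varphi$ to extraction of the constant term $p_0$ from the Ore normal form, and invoke the homomorphism theorem. You supply more detail than the paper does — notably the explicit two-inclusion argument identifying $\ker\varphi$ with ${}_O\langle\partial_1,\dots,\partial_s\rangle$, and the correct observation that $\partial_i\bullet 1=0$ makes the ``non-zero kernel'' hypothesis automatic — but the underlying mechanism is identical.
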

\begin{proof}
There is a left $O$-module homomorphism 
$$
\varphi : O \rightarrow A, \;\;\; a=\sum_{\alpha \in \N_0^s} a_{\alpha} \partial^\alpha \mapsto a \bullet 1 
$$
since $\varphi(b\cdot a) = (b\cdot a) \bullet 1= b\bullet \varphi(a)$. 
Due to Def. \ref{defOre} (\ref{deltaOperation}) we have $\delta(1)=0$ and thus $a \bullet 1=a_0$. 
The kernel of $\varphi$ is given by the left ideal 
${}_{O} \langle \partial_1, \dots, \partial_s \rangle $. Further,~$\varphi$ 
is clearly surjective. So the claim follows from the homomorphism theorem. 
\end{proof}

Following Theorem \ref{IsoOrePoly}, every polynomial $p \in A$ can be 
viewed as an element of the left $O$-module 
$O / {}_O \langle \partial_1, \dots, \partial_s \rangle$ by identifying 
$p$ with $p + {}_{O} \langle \partial_1, \dots, \partial_s \rangle=:[p]$. Then the 
action of $\partial_i$ is exactly the $\sigma_i$-derivation $\delta_i$, since 
\begin{eqnarray*}
\partial_i [p] = [ \partial_i p] = [ \sigma_i(p) \partial_i + \delta_i(p)] = 
[\delta_i(p)] = [\partial_i \bullet p].
\end{eqnarray*}

\begin{remark}\label{ann}
Let $p \in A$ and $o \in O$. Then there is the following equivalence
\begin{eqnarray*} 
o \bullet p =0 \;\;\; \mbox{ if and only if } \;\;\; 
o \cdot p \in {}_{O} \langle \partial_1, \dots, \partial_s \rangle.
\end{eqnarray*}
\end{remark}
\begin{proof}
By Theorem \ref{IsoOrePoly}, we have an $O$-module isomorphism 
$A \!\cong\! O / {}_O \langle \partial_1, \dots, \partial_s \rangle$ given by 
$$
A \stackrel{\cong}{\longrightarrow} O / 
{}_O \langle \partial_1, \dots, \partial_s \rangle, \;\;\;\;\;  p \mapsto [p].
$$
Since the $O$-module structure is respected, 
$o \bullet p$ maps to $[o \cdot p]$ and hence
the claim follows. 
\end{proof}

Remark \ref{ann} gives the possibility to describe and to compute the 
annihilator of an element $p \in A$. 
Consider the map
\begin{equation} \label{kappa_p}
\kappa_p : O \rightarrow O /  {}_O\langle\partial_1, \dots, \partial_s \rangle, 
\;\;\;\;\; o \mapsto o \cdot [p],
\end{equation}
which is clearly a left $O$-module homomorphism with the kernel
$$\ker(\kappa_p) = \ann_O(p):=\{ o \in O \ | \ o \bullet p=0  \}, $$ 
which is a left ideal in $O$. See Corollary \ref{corBasics} for its algorithmic computation.
This construction lifts to the case of vectors. Suppose 
$p= \left[ p_1, \dots, p_m \right]^{T}  \in A^m$. 
An element of $o \in O^{1 \times m}$ naturally acts on $p$ by 
$$o \bullet p := \sum_{i=1}^m o_i \bullet p_i .$$ 
A subset $B \subseteq A^m$ is called \textbf{invariant} under 
$G \subseteq O^{1 \times m}$ if and only if $o \bullet p = 0$ for all 
$o \in G$ and $p \in B$.
The set of elements under which $p$ is invariant has an $O$-module 
structure and equals to the kernel of 
$$
\kappa_p : O^{1 \times m} \rightarrow O /  {}_O \langle \partial_1, \dots, \partial_s \rangle, 
\;\;\;\;\; o=[o_1, \dots, o_m] \mapsto \sum_{i=1}^m o_i \cdot [p_i].
$$
Moreover the following isomorphism holds
$$O^{1\times m} / \ker(\kappa_p) \cong 
{}_{O}\langle p_1, \dots, p_m \rangle / {}_{O}\langle p_1, \dots, p_m \rangle \cap 
{}_{O}\langle \partial_1, \dots, \partial_s \rangle .$$

The image of $\kappa_p$ equals $( {}_{O} \langle p_1, \dots, p_m \rangle + {}_{O}\langle 
\partial_1, \dots, \partial_s \rangle )  /
{}_{O}\langle \partial_1, \dots, \partial_s \rangle.$ This is isomorphic to 
$ {}_{O}\langle p_1, \dots, p_m \rangle / 
{}_{O}\langle p_1, \dots, p_m \rangle \cap {}_{O}\langle \partial_1, \dots, \partial_s \rangle$. 
So the claim follows, since $\kappa_p$ is a homomorphism.

\begin{remark}\label{finitely}
If $O$ is Noetherian (see \cite{MR}), then the left submodule 
$\ker(\kappa_p) \subseteq O^{1 \times m}$ is finitely generated. \\

For a polynomial $m$-tuple $p\in A^m$, we consider 
\[
\ann_O (p) = \{ o\in O \mid o \bullet p = 0 \} = 
\{ o\in O \mid o \bullet p_i = 0 \ \forall \ i \} = 
\bigcap \ann_O (p_i),
\]
which is a left ideal in $O$. As we see immediately,
$\ann_O (p)^{1\times m}$ is a (usually strict) submodule
of $\ker(\kappa_p)$ and hence, the latter typically has 
more interesting structure, see Example \ref{exKerP}. It is
always possible to recover $\ann_O (p)$ from $\ker(\kappa_p)$.
In our opinion, using $\ker(\kappa_p)$ is more natural in the
context of vectors of signals.
\end{remark}

\section{Algorithmic computations}

For the concrete calculations used in this article, we need 
algorithms for the following computational tasks over (polynomial) 
Ore algebras:

\begin{enumerate}
\item syzygy module of a tuple of vectors
\item elimination of module components from a submodule of a free module
\item annihilator ideal of an element in a finitely presented module
\item kernel of a homomorphism of modules
\item intersection of a finite number of submodules of a free module.
\end{enumerate}

Let $O$ be a Noetherian Ore algebra. 
Moreover, let $M$ be a finitely presented left $O$-module, that is, there exists a matrix $P\in O^{m \times n}$ such that there is the following exact sequence of left $O$-modules:
\[
O^{1\times m} \stackrel{P}{\rightarrow} O^{1 \times n} \rightarrow M \rightarrow 0.
\]

Recall that for a tuple $F= (f_1,\ldots,f_s), f_i \subset O^{1\times n}$, 
the set
$\mathrm{LeftSyz}(F) := \{ [a_1,\ldots,a_s] \in O^{1\times s} \mid  \sum_i a_i f_i = 0 \}$ 
carries the structure of a left $O$-module and is called the \textbf{left syzygy module} of $F$. Since $O$ is Noetherian, $\mathrm{LeftSyz}(F)$ is finitely generated. 
Computation of syzygies over Noetherian Ore algebras can be accomplished with
several algorithms and requires Gr\"obner basis techniques; see \cite{Kr} for Ore algebras and \cite{GPS05} for the commutative case. \\

Let $\{e_i \}$ be the canonical basis of the free module 
$O^{1\times \ell} = \overset{\ell}{\underset{i=1}{\bigoplus}} O e_i$.


\begin{Proposition}\label{EliOfCom}
\label{propBasics}
\begin{enumerate}
\item \textit{``Elimination of module components''}. \\

Let $S\subset O^{1\times \ell}$ be a submodule. Moreover, let $<_O$ be a monomial ordering on $O$ and $<_m=(c,<_O)$ be a \textit{position over term} monomial module ordering on the free module $O^{1\times \ell}$, defined as follows.  The components are ordered in a descending way $e_1 > \dots > e_{\ell}$ and for any monomials $o_1, o_2 \in O$
\[
o_1 e_i <_m  o_2 e_j \; \Leftrightarrow \; j < i \; \text{ or } \; (j = i 
\text{ and } o_1 <_O o_2).
\]
Let $G$ be a Gr\"obner basis of $S$ with respect to $<_m$. Then $\forall \; 1\leq k < \ell \;\;$ $G \cap \overset{\ell}{\underset{i=k}{\oplus}} O e_i$ is a Gr\"obner basis of $S \cap\overset{\ell}{\underset{i=k}{\oplus}} O e_i$. 

\item \textit{``Kernel of a homomorphism of modules''}. \\

Consider an $O$-module homomorphism $O^{1 \times s} \stackrel{\psi}{\rightarrow} O^{1 \times n}/ O^{1\times m}P $, $e_i \mapsto [\Psi_i]$, where $\Psi_i \in O^{1 \times n}$. Let $P_i$ be the $i$-th row of the matrix $P$. Then
\[
\ker \psi = \mathrm{LeftSyz}( \ (\Psi_1,\ldots,\Psi_s,P_1,\ldots,P_m) \ ) \cap \bigoplus_{i=1}^s O e_i.
\]

\end{enumerate}
\end{Proposition}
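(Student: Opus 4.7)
The plan for Part 1 is to establish both inclusions of the claimed Gr\"obner basis equality. The easy inclusion $G \cap \bigoplus_{i=k}^\ell Oe_i \subseteq S \cap \bigoplus_{i=k}^\ell Oe_i$ is immediate; the substance is to show that for every nonzero $f \in S \cap \bigoplus_{i=k}^\ell Oe_i$ some element of $G \cap \bigoplus_{i=k}^\ell Oe_i$ has leading monomial dividing $\lm(f)$, i.e.\ the defining property of a Gr\"obner basis.

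The key lemma I would isolate first is a structural property of the position-over-term ordering $<_m$: for any nonzero $g = \sum_j g_j e_j$ with $\lm(g) = o\,e_{j_0}$, one must have $g_j = 0$ for every $j < j_0$, since under $<_m$ any monomial supported on a smaller-indexed component would strictly dominate $o\,e_{j_0}$. With this in hand the argument runs as follows. Take $f$ in $S \cap \bigoplus_{i=k}^\ell Oe_i$; its leading monomial sits on some $e_{i_0}$ with $i_0 \geq k$. Since $G$ is a Gr\"obner basis of $S$, pick $g \in G$ with $\lm(g) \mid \lm(f)$; divisibility in the free module forces $\lm(g)$ onto the same component $e_{i_0}$, hence of index $\geq k$, and the structural lemma then drags all of $g$ into $\bigoplus_{i=k}^\ell Oe_i$. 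Thus $g \in G \cap \bigoplus_{i=k}^\ell Oe_i$, as required.

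For Part 2 I would simply unpack the defining conditions. For $a=(a_1,\ldots,a_s) \in O^{1\times s}$ one has
\[
a \in \ker\psi \;\Longleftrightarrow\; \textstyle\sum_i a_i \Psi_i \in O^{1\times m} P \;\Longleftrightarrow\; \exists\, b \in O^{1\times m}:\ \textstyle\sum_i a_i \Psi_i - \sum_j b_j P_j = 0,
\]
and the last condition is precisely the statement that $(a_1,\ldots,a_s,-b_1,\ldots,-b_m)$ lies in $\mathrm{LeftSyz}((\Psi_1,\ldots,\Psi_s,P_1,\ldots,P_m)) \subseteq O^{1\times (s+m)}$. Hence $\ker\psi$ is recovered as the projection onto the first $s$ coordinates of this syzygy module; reading $\bigoplus_{i=1}^s Oe_i$ as the submodule of $O^{1\times(s+m)}$ with the last $m$ coordinates zero, this projection is extracted algorithmically via the component-elimination of Part 1, applied with a position-over-term ordering placing $e_{s+1},\ldots,e_{s+m}$ above $e_1,\ldots,e_s$.

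The main obstacle I anticipate is the structural lemma underpinning Part 1 and its careful compatibility with the noncommutative notion of left-module divisibility in the Ore setting — once it is nailed down, the remainder of Part 1 and all of Part 2 reduce to routine unwinding of definitions.
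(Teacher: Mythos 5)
Your Part~1 argument is the paper's: the ``structural lemma'' you isolate --- that under $<_m$ a leading monomial on $e_{j_0}$ forces the support of $g$ into components of index $\geq j_0$ --- is exactly what the paper invokes in the clause ``by definition of $<_m$, we have $g\in W$,'' and the preceding step, that divisibility places $\lm(g)$ on the same component as $\lm(f)$, is likewise identical. Your Part~2 is the same chain of equivalences as the paper's proof.

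One point worth flagging. You are right that $\ker\psi$ is the \emph{projection} of $\mathrm{LeftSyz}((\Psi_1,\ldots,\Psi_s,P_1,\ldots,P_m))$ onto the first $s$ coordinates, but this is not what the component elimination of Part~1 produces. Elimination computes the set-theoretic intersection
\[
\mathrm{LeftSyz}((\Psi_1,\ldots,\Psi_s,P_1,\ldots,P_m))\cap\bigoplus_{i=1}^s Oe_i
=\bigl\{(b_1,\ldots,b_s,0,\ldots,0)\ \big|\ \textstyle\sum_i b_i\Psi_i=0\bigr\},
\]
which is in general a strict submodule of $\ker\psi$: already for $s=m=n=1$, $\Psi_1=1$ and $P_1=x$ over $O=K[x]$, one has $\ker\psi=(x)$ while the intersection above is $0$. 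The projection is obtained by simply dropping the last $m$ coordinates from each generator of the syzygy module (projection is a module homomorphism, so projected generators generate the image); no elimination ordering is needed, and if one does want a Gr\"obner basis of $\ker\psi$ one recomputes afterwards. The proposition's ``$\cap$'' notation and the paper's own proof share this looseness, so your reading of $\ker\psi$ as a projection is the accurate one --- it is only the final sentence of your Part~2, reconnecting it to Part~1's elimination, that is off.
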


\begin{proof}
\begin{enumerate}
\item Define $W= \overset{\ell}{\underset{i=k}{\bigoplus}} O e_i$. Since $G$ is a Gr\"obner basis of $S$, for any $s \in S$ there exists $g \in G$ such that $\lm(g)$ divides $\lm(s)$. If $s\in S \cap W$, then $\lm(g) \in W$ and hence, by definition of $<_m$, we have $g \in W$ and $g \in G \cap W$. So, $G \cap W$ is a Gr\"obner basis of $S\cap W$.
\item We have \[
[b_1,\ldots,b_s] \in \ker \psi \ \Leftrightarrow \ \exists a_k\in O \ : \ 
\sum_{i=1}^s b_i \Psi_i + \sum_{k=1}^m a_k P_k =0 \ \Leftrightarrow 
\]
\[
[b_1,\ldots,b_s] \in \mathrm{LeftSyz}(\ (\Psi_1,\ldots,\Psi_s,P_1,\ldots,P_m)\ ) \cap \bigoplus_{i=1}^s O e_i.
\]

\end{enumerate}
\end{proof}

\begin{Corollary}
\label{corBasics}
\begin{enumerate}
\item \textit{``Annihilator of an element in a module''}. \\

Let $M = O^{1 \times n}/O^{1\times m}P$ and let $P_1,\ldots,P_m$ be the 
rows of $P$.
Moreover, let $v \in O^{1\times n}$. Then the left ideal 
$\ann_M^{O} (v) := \{ a\in O \mid a[v] = 0 \in M\}\subseteq O$ 
can be computed as
\[
\ann_M^{O} (v) 
= \ker (O \stackrel{\cdot [v]}{\rightarrow} M)
= \mathrm{LeftSyz}(\ (v,P_1,\ldots,P_m)\ ) \cap O e_1.
\]


\item \textit{"Intersection of finitely many submodules''}. \\

Let $N_1, \ldots, N_m \subset O^{1\times r}$ be submodules. Then
\[
\bigcap_{i=1}^m N_i = \ker \bigl(
O^{1\times r} \rightarrow (O^{1\times r} / N_1) \oplus \cdots \oplus (O^{1\times r} / N_m),
\;\; e_i \mapsto ([e_i],\ldots,[e_i]) \bigr).
\]
\end{enumerate}
\end{Corollary}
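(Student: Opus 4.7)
The plan is to derive both items as direct consequences of Proposition~\ref{propBasics}, together with two elementary set-theoretic identifications.

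For part~1, I would first observe that, by the very definition of $\ann_M^{O}(v)$, this left ideal coincides with the kernel of the left $O$-module homomorphism $\kappa : O \to M$, $a \mapsto a \cdot [v]$. This places us exactly in the setting of Proposition~\ref{propBasics}(2) with $s = 1$ and $\Psi_1 = v$, since $M = O^{1\times n}/O^{1\times m}P$ is finitely presented and $\kappa$ is determined by $e_1 \mapsto [v]$. Applying that formula yields
\[
\ann_M^{O}(v) \;=\; \ker(\kappa) \;=\; \mathrm{LeftSyz}\bigl((v,P_1,\ldots,P_m)\bigr) \cap Oe_1,
\]
which is the claim. No extra work is required; the content lies entirely in recognizing the annihilator as the kernel of a module homomorphism into a finitely presented target.

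For part~2, I would introduce the diagonal left $O$-module homomorphism
\[
\phi : O^{1\times r} \longrightarrow \bigoplus_{i=1}^{m} O^{1\times r}/N_i, \qquad x \mapsto ([x],\ldots,[x]),
\]
and verify, purely set-theoretically, that $x \in \ker\phi$ iff $[x] = 0$ in every summand, that is, iff $x \in N_i$ for every $i$; this gives $\ker\phi = \bigcap_{i=1}^m N_i$. To make this algorithmically effective via Proposition~\ref{propBasics}(2), I would then represent the codomain as a single finitely presented module: choosing presentation matrices $Q_i \in O^{k_i \times r}$ with $N_i = O^{1\times k_i} Q_i$, the direct sum is presented by the block-diagonal matrix $Q = \mathrm{diag}(Q_1,\ldots,Q_m) \in O^{(\sum k_i)\times mr}$, and $\phi$ is induced by the $r \times mr$ matrix $\Psi$ whose rows are the $m$-fold concatenations $(e_i,\ldots,e_i)$. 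Proposition~\ref{propBasics}(2) then expresses $\ker\phi$ as a suitable left syzygy module intersected with a free summand, and the intersection (i.e.\ the elimination) is carried out by Proposition~\ref{propBasics}(1).

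I do not anticipate a genuine difficulty here: the two statements are essentially translations of well-known module-theoretic facts into the language of syzygies and component elimination made available in the preceding proposition. The only point requiring a moment of care is, in part~2, that the codomain is a finitely presented $O$-module and that the induced presentation of $\phi$ is correct, so that Proposition~\ref{propBasics}(2) is directly applicable.
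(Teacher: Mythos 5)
The paper states this corollary without proof, treating both items as immediate consequences of Proposition~\ref{propBasics}; your derivation is correct and fills in exactly the intended reasoning — recognizing $\ann_M^O(v)$ as $\ker(O\to M,\ a\mapsto a[v])$ and applying Proposition~\ref{propBasics}(2) with $s=1$, and for part~2 identifying $\ker\phi=\bigcap N_i$ set-theoretically and then presenting the codomain by a block-diagonal matrix so Proposition~\ref{propBasics}(2) and~(1) apply. No gaps.
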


\begin{remark}
For an $O$-module homomorphism 
$O^{1 \times s}/O^{1\times r}Q \!\stackrel{\psi'}{\rightarrow} \!
O^{1 \times n}/O^{1\times m}P$, its kernel is the image of 
$\ker \psi$ (as in Theorem~\ref{propBasics}) under the natural projection 
$O^{1 \times s} \rightarrow O^{1 \times s}/ O^{1\times r}Q$. 
A left Gr\"obner basis can be obtained by reducing a
left Gr\"obner basis of $\ker \psi + O^{1\times r}Q$ with a left 
Gr\"obner basis of $O^{1\times r}Q$, see \cite{LV05}.\\

Note that in practical computations, elimination of module components 
is usually not complicated. This stands in distinct contrast with the 
elimination of algebra variables, which is often very hard to achieve. 
The algorithms used in this article involve only the elimination of 
module components and thus are feasible in practice.\\

The algorithms we have discussed are implemented in computer algebra systems like e.g. \textsc{Singular::Plural} \cite{Plural} or {\sc Maple} \cite{CQR, CS}  with the package \textsc{OreModules}. More background on these algorithms can be found in e.g. \cite{Kr}, \cite{LV05}. \\

In particular, a set of generators of $\ker(\kappa_p)$ from the
previous section can be calculated explicitly.

By Proposition \ref{EliOfCom}(2) $\ker(\kappa_p)$ is obtained via the kernel of a module homomorphism, that is, by one Gr\"obner basis computation with respect to module monomial ordering eliminating components. The monomial part of this ordering can be chosen arbitrarily to be e.~g.~a fast one.
\end{remark}

\section{Application to linear exact modeling}

We will now use the results from above to define an unfalsified and most 
powerful model over an Ore algebra. 

\textbf{Assumptions and notations:}
Suppose $O$ to be a Noetherian Ore algebra with the additional property that 
$\partial_i$ acts nontrivially on $A$ for all $1 \leq i \leq s $.

Recall that $\mathcal{A}_O $ denotes a function space over $K$ possessing 
an $O$-module structure. Suppose further that $A \subseteq \mathcal{A}_O$.

\begin{remark}\label{noetherian} $[$ \cite{MR}, Theorem 1.2.9.$]$ 
Since $A$ is Noetherian, $O$ is Noetherian if
$\sigma_i$ is an automorphism for all $1\leq i\leq s$ on $A$.
\end{remark}
Thus all Ore algebras considered 
in Example \ref{interestingAlgebras} are Noetherian.

Suppose that the $\sigma$-derivation acts nontrivially on $A$. Then the 
corresponding most powerful unfalsified model varies under $O$.

Starting with a single signal $p \in A^m$, we want to find the $\VMPUM$ of $p$,
that is a behavior, invariant under some finitely generated submodule of $O^{1 \times m}$.

\begin{Th}\label{vmpum} Let $p\in A^m$ be given. Consider the map
$\kappa_p$ from (\ref{kappa_p}).
Let $\ker(\kappa_p)={}_{O} \langle k_1, \dots, k_r \rangle$ 
and let $R \in O^{r \times m}$ be a matrix whose
$i$-th row equals $k_i$. 
Then the $\VMPUM$ of $ \lbrace p \rbrace$ is given by
$$ \mathcal{B}_{\left\lbrace p \right\rbrace }^V=
\left\lbrace g\in \mathcal{A}_O^m \; | \; R \bullet g=0 \right\rbrace. $$
\end{Th}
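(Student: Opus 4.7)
The plan is to verify that the behavior $\mathcal{B} := \{g \in \mathcal{A}_O^m \mid R \bullet g = 0\}$ satisfies the two defining properties of a most powerful unfalsified model of $\{p\}$: unfalsifiedness ($p \in \mathcal{B}$) and the most-powerful property that $\mathcal{B}$ is contained in every behavior of the model class which already contains $p$. The label ``V'' in the notation $\mathcal{B}_{\{p\}}^V$ only records that, under the standing hypothesis that the $\partial_i$ act nontrivially on $A$, the resulting $\mathcal{B}$ will generically fail to be $O$-invariant, as illustrated in Example \ref{MotivatingExample}(2); this is not part of the universal property to be established.

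For unfalsifiedness, each row $k_i$ of $R$ lies in $\ker(\kappa_p)$ by construction. Applying the vector version of Remark \ref{ann} — obtained by promoting the isomorphism $A \cong O/{}_O\langle \partial_1,\dots,\partial_s\rangle$ of Theorem \ref{IsoOrePoly} componentwise to $A^m$ — this translates into $k_i \bullet p = 0$ for every $i$, so that $R \bullet p = 0$ and $p \in \mathcal{B}$.

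For the most-powerful property, take any behavior $\mathcal{B}' = \{g \in \mathcal{A}_O^m \mid R' \bullet g = 0\}$ with rows $r_1',\dots,r_{r'}'$, and assume $p \in \mathcal{B}'$. The same correspondence yields $r_j' \in \ker(\kappa_p)$ for every $j$, and by the hypothesis $\ker(\kappa_p) = {}_O\langle k_1,\dots,k_r\rangle$ one can write $r_j' = \sum_{i=1}^r a_{ji} k_i$ with $a_{ji} \in O$. Given $g \in \mathcal{B}$, so $k_i \bullet g = 0$ for all $i$, the left $O$-module axioms for $\mathcal{A}_O$ (associativity and additivity of $\bullet$) propagate this to
\[
r_j' \bullet g \;=\; \sum_{i=1}^r (a_{ji} k_i) \bullet g \;=\; \sum_{i=1}^r a_{ji} \bullet (k_i \bullet g) \;=\; 0,
\]
whence $g \in \mathcal{B}'$ and $\mathcal{B} \subseteq \mathcal{B}'$.

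The only step carrying genuine content is the equivalence $\ker(\kappa_p) = \{o \in O^{1\times m} \mid o \bullet p = 0\}$, which is the vector analogue of Remark \ref{ann} already recorded in the discussion surrounding (\ref{kappa_p}). Once that correspondence is granted, the theorem is essentially a restatement of the fact that $\ker(\kappa_p)$ is, by construction, the largest left $O$-submodule of $O^{1\times m}$ annihilating $p$, with the module structure on $\mathcal{A}_O$ converting left $O$-linear combinations of operators into linear combinations of vanishing equations on solutions.
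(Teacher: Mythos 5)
Your proof is correct and follows essentially the same route as the paper's: unfalsifiedness comes from the rows of $R$ lying in $\ker(\kappa_p)$ via (the vector version of) Remark \ref{ann}, and the most-powerful property comes from writing the rows of any competing kernel representation $R'$ as left $O$-linear combinations of $k_1,\dots,k_r$ and propagating $R\bullet g = 0$ to $R'\bullet g = 0$ through the $O$-module axioms. The paper phrases the second step in matrix form, producing $X\in O^{r'\times r}$ with $R'=XR$, which is exactly your row-by-row coefficient matrix $(a_{ji})$.
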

\begin{proof}
By the definition of $R$ and Remark \ref{ann},  it is clear that 
$\left\lbrace p \right\rbrace \subseteq \mathcal{B}_{\left\lbrace p \right\rbrace }^V $.

It remains to show that $\mathcal{B}_{\left\lbrace p \right\rbrace }$ is most powerful. 
Suppose there exists another behavior $\mathcal{B}'$ unfalsified by $p$. The behavior $\mathcal{B}'$ 
possesses a kernel representation 
$R' \in O^{r' \times m}$.
By the definition of $R$, there exists a matrix $X \in O^{r' \times r}$ such that $R'=XR$. 
But since $(X \cdot R) \bullet p = X \bullet (R \bullet p)$, it follows that 
$\mathcal{B}_{\left\lbrace p \right\rbrace }^V \subseteq \mathcal{B}'$.
\end{proof}

\begin{ex}\label{VMPUMex}
 Let us consider a more interesting example than Example \ref{MotivatingExample}
with respect to our favorite algebras from Example \ref{interestingAlgebras}.  Let $\Omega=\{\omega\}$ 
consists of the cuspidal cubic $$\omega(t_1,t_2)=t_1^3-t_2^2.$$ 
Let us denote by $\mathcal{A}_O=\C[[t_1,t_2]]$ the ring of formal power series and consider the 
$\VMPUM$ $ \mathcal{B}_{\left\lbrace \omega \right\rbrace }^V= \lbrace f \in
\mathcal{A}_O \; | \; R_{\VMPUM} \bullet f= 0 \rbrace $ of $\Omega$ with respect to several
operator algebras $O$.
\begin{enumerate}
 \item Suppose $O$ to be the second \textbf{Weyl algebra} (see Example \ref{interestingAlgebras}). Then
by using {\sc Singular} we obtain: 
$$ R_{\VMPUM} =  
\left[ \begin{array}{c} 
\partial_2^3 \\
\partial_1 \partial_2 \\
\partial_1^3 + 3 \partial_2^2 \\
t_2 \partial_2^2-\partial_2 \\
t_2 \partial_1^2 + 3 t_1 \partial_2\\
2 t_1 \partial_1+3 t_2 \partial_2-6
\end{array} \right]. $$ 
Now let us determine $\mathcal{B}_{\left\lbrace \omega \right\rbrace }^V$ to see how 
precise the description given by the $\VMPUM$ is. Let $f \in \mathcal{A}_O$.
\begin{enumerate}
\item $\partial_2^3 \bullet f = 0 \;\; \Rightarrow \;\; f=c_0 + c_1 t_2 + c_2 t_2^2, \;$ where
      $c_i \in \C[[t_1]]$.
      
\item $\partial_1 \partial_2 \bullet f = 0 \;\; \Rightarrow \;\; \partial_1\bullet c_1 + 2 t_2 \partial_1 \bullet c_2=0 
       \;\; \Rightarrow \;\; \partial_1\bullet c_1 =0  \; \wedge \; \partial_1\bullet c_2 =0$
      $  \Rightarrow \;\; c_1, c_2 \in \C $.
\item $ (\partial_1^3 + 3 \partial_2^2) \bullet f=0 \;\; \Rightarrow \;\; 
        \partial_1^3 \bullet c_0 +6 c_2=0 \;\; \Rightarrow \;\; 
        c_0 = -c_2 t_1^3 + d_2t_1^2 + d_1 t_1 +d_0, \;$ where $ d_i \in \C$.
\item $(t_2 \partial_2^2-\partial_2) \bullet f=0 \;\; \Rightarrow \;\; c_1=0 $.
\item $(3 t_1 \partial_2+t_2 \partial_1^2) \bullet f=0 \;\; \Rightarrow \;\;
       d_2=0$. 
\item $(2 t_1 \partial_1+3 t_2 \partial_2-6) \bullet f = 0 \;\; \Rightarrow \;\; 
       -4d_1t_1-6d_0 = 0 \;\; \Rightarrow \;\;d_1=0=d_0$.
\end{enumerate}
Hence, we obtain that $f=c(t_1^3 - t_2^2)$, thus 
$$ \mathcal{B}_{\left\lbrace \omega \right\rbrace }^V= \{ c (t_1^3 - t_2^2) \ | \ c\in \C \}.$$
With respect to the requirement of being most powerful and linear, 
the $\VMPUM$ is as significant as possible.
We observe that the $\VMPUM$ of a single non-zero 
signal has $\C$-dimension one. 
Actually, this holds in general, as will be shown in Theorem
\ref{CDimension}.
\item  Suppose $O$ to be the second \textbf{difference algebra} see Example \ref{interestingAlgebras}. Then
by using {\sc Singular} we obtain: 
$$R_{\VMPUM} = 
\left[ \begin{array}{c} \Delta_2^3\\
\Delta_1 \Delta_2\\
\Delta_1^3+3 \Delta_2^2\\
2 t_2 \Delta_2^2+\Delta_2^2-2 \Delta_2\\
2 t_2 \Delta_1^2+\Delta_1^2+6 t_1 \Delta_2+6 \Delta_2\\
8 \Delta_1^2 +21 \Delta_2^2 + 24 t_1 \Delta_1+36 t_2 \Delta_2-24 \Delta_1-18 \Delta_2-72
\end{array} \right]. $$ 
Similar arguments as above lead us to
$$\mathcal{B}_{\left\lbrace \omega \right\rbrace }^V= \{ c (t_1^3 - t_2^2) \ | \ c\in \C \}.$$
\item Suppose $O$ to be the second \textbf{$\mathcal{SW}$ algebra} see Example \ref{interestingAlgebras}. Then
by using {\sc Singular} we obtain:
$$
R_{\VMPUM} = 
\left[ \begin{array}{c}
\Delta_2^3\\
\Delta_1\Delta_2\\
\Delta_1^3+3\Delta_2^2\\
2\partial_2+\Delta_2^2-2\Delta_2\\
2\partial_1+\Delta_1^2-2\Delta_1+2\Delta_2^2\\
2t_2\Delta_2^2+\Delta_2^2-2\Delta_2\\
2t_2\Delta_1^2+\Delta_1^2+6t_1\Delta_2+6\Delta_2\\
8\Delta_1^2+21\Delta_2^2+24t_1\Delta_1+36t_2\Delta_2-24\Delta_1-18\Delta_2-72
\end{array}\right]. $$
Note that generators in the output depend on the monomial ordering of the operators. 
In this example $\Delta_{1,2}$ were chosen to be greater 
that $\d_{1,2}$. Taking a reverse ordering
produces different (but equivalent) answer.

Comparing this matrix with the matrix above, we see that the rows of the matrix belonging to
the difference case appear also here. We conclude that 
$$\mathcal{B}_{\left\lbrace \omega \right\rbrace }^V= \{ c (t_1^3 - t_2^2) \ | \ c\in \C \}.$$
Thus, taking $\mathcal{SW}$ as operator algebra, we have got more equations 
than with the difference algebra. However, we have obtained very interesting mixed differential-difference equations, which show the interplay of two different operator settings.

 \item  The second \textbf{$q$-difference algebra} see Example \ref{interestingAlgebras}:
\begin{small} $$
R_{\VMPUM} = \left[ \begin{array}{c}
\partial_2^2+(-q^2+1) \partial_2\\
(-q-1) \partial_1+(-q^2-q-1) \partial_2+(q^4+q^3-q-1)\\
t_1^3 \partial_2-t_2^2 \partial_2+(q^2-1) t_2^2
\end{array}\right]. $$ \end{small}
\begin{enumerate}
\item The first equation yields:
\begin{align*}
&\;\; \sum_{i,j}c_{i,j}(q^j-1)^2t_1^it_2^j + (-q^2+1) \sum_{i,j}c_{i,j}(q^j-1)t_1^it_2^j = 0 \\
\Leftrightarrow & \;\;(q^j-1)^2+(-q^2+1)(q^j-1)=0 \\
\Leftrightarrow & \;\; j=0 \vee j=2.
\end{align*}
\item Now consider the second equation. 
\begin{enumerate}
\item Suppose $j=2$, then
\begin{align*}
& \;\;(-q-1)\sum_{i,j}c_{ij}(q^i-1)t_1^it_2^2  +(-q^2-q-1) \sum_{i,j}c_{ij}(q^2-1)t_1^it_2^2\\
& \;+(q^4+q^3-q-1)\sum_{i,j}c_{ij}t_1^it_2^2=0\\
\Leftrightarrow & \;\; (-q-1)(q^i-1)+(-q^2-q-1)(q^2-1)+(q^4+q^3-q-1)=0 \\
\Leftrightarrow & \;\; i=0.
\end{align*}
\item Suppose $j=0$, then
\begin{align*}
& \;\;(-q-1)\sum_{i,j}c_{i0}(q^i-1)t_1^i +(q^4+q^3-q-1)\sum_{i}c_{i0}t_1^i=0 \\
\Leftrightarrow & \;\; i=3.
\end{align*}
Thus \quad $f = c_{30} t_1^3 + c_{02}t_2^2$.
\end{enumerate}
\item Applying the last equation, we get 
\begin{align*}
\;\;t_1^3 c_{02}(q^2-1)t_2^2 -t_2^2 c_{02}(q^1-1)t_2^2 +(q^2-1) t_2^2 ( c_{30} t_1^3 + c_{02}t_2^2 ) =0\\
\Leftrightarrow \;\; t_1^3t_2^2(q^2-1)( c_{30} + c_{02} ) = 0 \;\; \Leftrightarrow \;\; c_{30} =-c_{02}.
\end{align*}
\end{enumerate}
Thus we obtain once more $\mathcal{B}_{\left\lbrace \omega \right\rbrace }^V= \{ c (t_1^3 -t_2^2) \ | \ c\in \C \}$.
\end{enumerate}
\end{ex}

\begin{remark}
As we have seen in the previous example, 
the number of equations giving the $\VMPUM$
depends strongly on the underlying Ore algebra. 
In all cases, with Gr\"obner bases we get more equations than it might be actually necessary. 
However, it is possible to compute a smaller generating set, which is usually not a Gr\"obner basis.
Namely, one computes a left syzygy module of a given system and almost directly deduces 
a smaller generating set from it. As an example, we show that only 3 of 6 equations from the
first example of \ref{VMPUMex} generate the whole ideal,
namely $\d_1 \d_2, \d_1^3+3 \d_2^2, 2 t_1 \d_1+ 3 t_2 \d_2-6$. Analogous smaller generating sets
can be obtained for other examples.
\end{remark}

Theorem \ref{vmpum} can be generalized to a set of several signals directly. 
A kernel representation of the $\VMPUM$ of 
$\Omega=\{\omega_1, \dots, \omega_N \}$ is determined by stacking a set of 
generators of $$\bigcap_{i=1}^N \ker( \kappa_{\omega_i} )$$
row-wise into a matrix $R$. 

\begin{Th}
Using the notation from above, the $\VMPUM$ of $\Omega$ equals 
$$\mathcal{B}_{\Omega}^V= \left\lbrace g \in \mathcal{A}_O^m \; | \; R \bullet g=0 \right\rbrace.$$
\end{Th}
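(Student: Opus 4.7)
The plan is to adapt the proof of Theorem \ref{vmpum} essentially verbatim, replacing a single annihilator $\ker(\kappa_p)$ by the intersection $K := \bigcap_{i=1}^N \ker(\kappa_{\omega_i})$. The two things that need to be verified are exactly those in the definition of the $\VMPUM$: first, that $\Omega \subseteq \mathcal{B}_\Omega^V$ (unfalsifiedness); second, that $\mathcal{B}_\Omega^V$ is contained in every behavior $\mathcal{B}'$ with $\Omega \subseteq \mathcal{B}'$ (most powerfulness).

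For unfalsifiedness, I would argue as follows. By construction, every row $k$ of $R$ lies in $K$, so in particular $k \in \ker(\kappa_{\omega_i})$ for each $i$. By Remark \ref{ann} (applied componentwise, as in the vector version of $\kappa_p$ set up before Remark \ref{finitely}), this means $k \bullet \omega_i = 0$. Ranging $k$ over the rows of $R$ and $i$ over $1,\dots,N$ gives $R \bullet \omega_i = 0$ for all $i$, hence $\Omega \subseteq \mathcal{B}_\Omega^V$.

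For most powerfulness, let $\mathcal{B}'$ be any behavior containing $\Omega$, represented in kernel form by some $R' \in O^{r' \times m}$. Since $R' \bullet \omega_i = 0$ for every $i$, each row of $R'$ lies in $\ker(\kappa_{\omega_i})$, hence in the intersection $K$. Because $k_1,\ldots,k_r$ generate $K$ as a left $O$-module, there is a matrix $X \in O^{r' \times r}$ with $R' = X R$. For any $g \in \mathcal{B}_\Omega^V$, the $O$-module axiom $(X \cdot R) \bullet g = X \bullet (R \bullet g)$ gives $R' \bullet g = 0$, so $g \in \mathcal{B}'$, as needed.

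The only non-trivial point, and where I expect the main obstacle to lie conceptually, is the identification of the correct algebraic object governing the problem, namely the intersection $\bigcap_i \ker(\kappa_{\omega_i})$, and the assertion that it is finitely generated so that the matrix $R$ actually exists. This is where the assumption that $O$ is Noetherian (so the intersection is a finitely generated submodule of $O^{1 \times m}$, by Remark \ref{noetherian} and the standard intersection formula in Corollary \ref{corBasics}(2)) is indispensable. Once the finite generating set $k_1,\ldots,k_r$ is in hand, the rest is the same formal game as in the single-signal case.
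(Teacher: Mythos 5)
Your proof is correct and takes essentially the same route as the paper, which simply notes that unfalsifiedness is clear from the definition of $R$ and that most powerfulness follows ``by the same arguments as used in the proof of Theorem~\ref{vmpum}''. You have merely written out in full the verbatim adaptation that the paper leaves implicit, including the (correct) observation that Noetherianity of $O$ guarantees the intersection $\bigcap_i \ker(\kappa_{\omega_i})$ is finitely generated so that $R$ exists.
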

\begin{proof}
By the definition of $R$, it is clear 
that $\Omega \subseteq \mathcal{B}_\Omega^V$. Also the property 
of being most powerful follows by the same arguments 
as used in the proof of Theorem~\ref{vmpum}.
\end{proof}

\begin{ex}
Suppose $O$ to be the first Weyl algebra and $\mathcal{A}_O=\mathcal{C}^\infty(\R,\C)$.  
Consider the signal set $\Omega = \{ t, v_0 t-v_1 t^2\}$, 
where $v_0, v_1 \in \C \setminus \{0\}$. 
The second trajectory will appear in Example $\ref{wurf}$ again. Since
\begin{align*}
& \ker(\kappa_t) \cap \ker(\kappa_{v_0t-v_1t^2})\\  & = {}_{\W_1} \langle t \partial-1,\ \partial^2 \rangle  
\cap {}_{\W_1} \langle -v_0^2 \partial^2+(4v_1^2 t- 2v_0v_1)\partial-8v_1^2 ,\ \partial^3 \rangle \\
& = {}_{\W_1} \langle t^2 \partial^2 - 2 t \partial + 2 , \partial^3 \rangle, 
\end{align*}
the $\VMPUM$ of $\Omega$ is given by
$ \mathcal{B}_\Omega^V=\{ c_1 t + c_2 t^2 \ | \ c_1, c_2 \in \C \}$.
The intersection of submodules of a free module 
over a Noetherian Ore algebra can be computed as in 
Corollary \ref{corBasics}, 
for instance with the system \textsc{Singular::Plural} \cite{Plural}. 
\end{ex}

\section{VMPUM by using the polynomial Weyl algebra}

In this section, we suppose $O$ to be the $n$-th Weyl algebra 
\begin{center}
$O = \W_n:= \C[t_1, \dots, t_n][ \partial_1; \id_{ \W_n}, \frac{\partial}{\partial t_1}] 
\cdots [ \partial_n; \id_{ \W_n}, \frac{\partial}{\partial t_n}].$
\end{center}
Thus for $p \in\C[t_1, \dots, t_n]$, we obtain $ \partial_i \bullet p := \frac{\partial p}{\partial t_i}$.
Further suppose $\mathcal{A}_O $ to be $\mathcal{C}^\infty(\R^n,\C)$, the space of 
smooth functions. Identifying a polynomial with the corresponding polynomial function, we
obtain $A \subseteq \mathcal{A}_O$.

In this context, the $\VMPUM$ was already introduced in \cite{VMPUM}. 
Here, we will recall some results and additionally 
point out a new interesting property. 

\subsection{$\C$-dimension}
A known result is that the $\VMPUM$ is a finite-dimensional
vector space over~$\C$, since it is contained in the 
corresponding $\MPUM$ \cite{VMPUM}.
In some cases, we can determine the dimension more precisely. 
We claim that the $\VMPUM$ of
a single non-zero signal has $\C$-dimension one.

Suppose $p \in A^m$. Every polynomial $p_i$ can be written as 
$\sum_{k=1}^{h_i}c_{ik}t^{\beta_{ik}}$, 
where $c_{ik} \in \C$ for all $i,k$. Let 
$\mathcal{E}_i:=\left\lbrace \beta_{i1}, \dots, 
\beta_{ih_i} \right\rbrace \subset \N_0^n$ denote the set of all 
exponent multi-indices occurring in $p_i$ and let
\begin{eqnarray}\label{expo}
d_{ij}:=\max_{1\leq k \leq h_i} \left\lbrace (\beta_{ik})_j \; | \; \beta_{ik} \in 
\mathcal{E}_i \right\rbrace 
\end{eqnarray}
be the highest degree in $t_j$ of $p_i$. Recall that by $e_i$ we denote the $i$-th canonical 
generator of the free module $A^m$. The set 
$$
\mathcal{E}_{p_i}=\{ \alpha \in {\N}_0^n \, | \, \alpha_j \leq d_{ij}+1 \; \mbox{ for } \, 1\leq j \leq n \}
$$
is finite, that is, $\mathcal{E}_{p_i} = \lbrace \alpha_{i1}, \dots, \alpha_{il_i} \rbrace$.
Define for $p\in A^m$
$$ \Der_p=\lbrace p_1, \frac{\partial^{|\alpha_{11}|}p_1}{\partial^{\alpha_{11}}}, 
\dots,\frac{\partial^{|\alpha_{1l_1}|}p_1}{\partial^{\alpha_{1l_1}}}, \, \dots \, , p_m, 
\frac{\partial^{|\alpha_{m1}|}p_m}{\partial^{\alpha_{m1}}}, \, \dots \,, 
\frac{\partial^{|\alpha_{ml_m}|}p_m}{\partial^{\alpha_{ml_m}}} \rbrace.$$
Let $\Syz(\Der_p)$ denote the module of polynomial syzygies.
Define for the matrix
$M=[ 1 e_1, \; \partial^{\alpha_{11}} e_1, \; \dots, \;
\partial^{\alpha_{1l_1}}e_1, \; \dots, \; 1 e_m, \; \partial^{\alpha_{m1}}e_m, \;
\dots, \; \partial^{\alpha_{ml_m}} e_m]^T$
the $A$-module homomorphism 
$$\Phi_p:\Syz(\Der_p) \rightarrow \ker(\kappa_p), \;\;\; (q_1, \dots, q_l) \mapsto  (q_1, \dots, q_l) \cdot M, 
$$
which is clearly injective. 



\begin{Lemma}
\label{KomCom}
${}_{\W_n}\langle \Bild(\Phi_p) \rangle = \ker(\kappa_p)$.
\end{Lemma}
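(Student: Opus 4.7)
The inclusion $\langle \Bild(\Phi_p) \rangle \subseteq \ker(\kappa_p)$ is built into the definition of $\Phi_p$, whose image is declared to lie in $\ker(\kappa_p)$ (as one checks by expanding $\kappa_p(\Phi_p(q)) = \sum_{i,\alpha} q_{i\alpha}(\partial^\alpha \bullet p_i) = 0$ for any $q \in \Syz(\Der_p)$), together with the fact that $\ker(\kappa_p)$ is a left $\W_n$-module. The real content is therefore the reverse inclusion: every $o \in \ker(\kappa_p)$ must be expressible as a left $\W_n$-combination of elements of $\Bild(\Phi_p)$. My plan is to write each component $o_i = \sum_{\beta} a_{i\beta} \partial^\beta$ in normal form with $a_{i\beta} \in A$, split the sum according to whether $\beta \in \mathcal{E}_{p_i}$ or not, and reduce each of the two pieces to an element of $\langle \Bild(\Phi_p) \rangle$ by a different mechanism.

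For the ``out-of-box'' terms ($\beta \notin \mathcal{E}_{p_i}$, i.e., some $\beta_j \geq d_{ij}+2$), I would set $\alpha_j := \min(\beta_j, d_{ij}+1)$ and $\gamma := \beta - \alpha$. Then $\alpha \in \mathcal{E}_{p_i}$, $\gamma \in \N_0^n$, and $\partial^\beta = \partial^\gamma \partial^\alpha$ since the $\partial_j$ commute. Moreover, at least one $\alpha_j$ equals $d_{ij}+1 > d_{ij}$, so $\partial^\alpha \bullet p_i = 0$. The trivial syzygy supported on this zero entry of $\Der_p$ lies in $\Syz(\Der_p)$ and is mapped by $\Phi_p$ to $\partial^\alpha e_i$; hence $a_{i\beta} \partial^\beta e_i = (a_{i\beta}\partial^\gamma) \cdot (\partial^\alpha e_i) \in \W_n \cdot \Bild(\Phi_p)$. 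Summing over all such terms gives $o \equiv o' \pmod{\langle \Bild(\Phi_p)\rangle}$ for $o' := \sum_{i} \sum_{\beta \in \mathcal{E}_{p_i}} a_{i\beta} \partial^\beta e_i$. The easy inclusion and the assumption $o \in \ker(\kappa_p)$ together force $o' \in \ker(\kappa_p)$, which, reading off $\kappa_p(o')$, is literally the assertion that $(a_{i\alpha})_{i,\,\alpha \in \mathcal{E}_{p_i}} \in \Syz(\Der_p)$. Applying $\Phi_p$ to this syzygy recovers $o'$, so $o' \in \Bild(\Phi_p)$ and $o \in \langle \Bild(\Phi_p) \rangle$, as required.

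The main conceptual hurdle is recognizing why $\mathcal{E}_{p_i}$ is deliberately taken one unit larger than the minimal box $\{0,\dots,d_{ij}\}$ in each coordinate: the additional layer at $\alpha_j = d_{ij}+1$ supplies precisely the zero derivatives of $p_i$ which, through trivial syzygies on $\Der_p$, place every $\partial^\alpha e_i$ of this form into $\Bild(\Phi_p)$. It is this stock of ``free'' generators that lets one absorb arbitrarily high-order monomials of $o$ into the left $\W_n$-module generated by $\Bild(\Phi_p)$; without it, the reduction step above would not even make sense. Once this observation is made, the rest is bookkeeping via the commuting-monomial factorization $\partial^\beta = \partial^\gamma \partial^\alpha$; no Gr\"obner computation or induction on the order of $o$ is required.
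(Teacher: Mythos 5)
Your proof is correct and follows essentially the same route as the paper's: split each component of $o$ according to whether the $\partial$-exponent lies in the box $\mathcal{E}_{p_i}$, absorb the out-of-box terms into ${}_{\W_n}\langle\Bild(\Phi_p)\rangle$ through the trivial syzygies supported on the zero entries of $\Der_p$, and recognise the in-box remainder as $\Phi_p$ applied to a polynomial syzygy of $\Der_p$. Your explicit factorisation $\partial^\beta=\partial^\gamma\partial^\alpha$ with $\alpha_j=\min(\beta_j,d_{ij}+1)$ only makes precise a reduction that the paper leaves implicit; the underlying decomposition and the use of the boundary layer $\alpha_j=d_{ij}+1$ as a stock of ``free'' generators are identical.
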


\begin{proof}
Evidently ${}_{\W_n} \langle \Bild(\Phi_p) \rangle \subseteq \ker(\kappa_p)$. Now suppose that 
$a \in \ker(\kappa_p)$. Since every element in $\W_n$ can be written in 
normal form, 
we obtain
\begin{align*}
 a \bullet p & = \sum_k a_k \bullet p_k = \sum_k ( \sum_j c_{kj} t^{\beta_{kj}} \partial^{\gamma_{kj}} ) \bullet p_k \\
& = \sum_k ( \sum_j c_{kj} t^{\beta_{kj}})( \partial^{\gamma_{kj}} \bullet p_k ).
\end{align*}
Let us split the element $a$ in $a_z$ and $a_{nz}$ such that $a=a_z+a_{nz}$ and $(a_z)_k$ consists of the parts of $a_k$ where $\partial^{\gamma_{kj}} \bullet p_k$ is zero. 

By the choice of $d_{ij}$, the set
$\{ \partial_j^{(d_{ij} +1)} \, | \, 1\leq j \leq n, \; 1 \leq i \leq m \}$
generates the set of $\partial^\gamma$
with the property that there exists $1 \leq i \leq m$ such that 
$ \partial^\gamma \bullet p_i=0.$
Then $a_z$ is contained 
${}_{W_n}\langle \partial^{(d_{ij} +1)e_j} \, | \, 1\leq j \leq n, \; 1 \leq i \leq m \rangle$. 
But by the choice of $\Der_p$, the element $a_z$ is in the image of $\Phi_p$. 
Suppose $\partial^{\gamma_{kj}} \bullet p_k \neq 0$, then $\gamma_{kj}$ is equal or smaller than 
$(d_{k1}, \dots, d_{kn})$ in each component and again 
by the choice of $\Der_p$, the element $a_{nz}$ is contained in the image of $\Phi_p$. 
Thus it follows that $a \in \Bild(\Phi_p)$.
\end{proof}

\begin{Th}\label{CDimension}
 The $\VMPUM$ of $p \neq 0$ is a one-dimensional vector space over~$\C$.
\end{Th}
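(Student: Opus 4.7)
The plan is to identify the VMPUM with a space of $\W_n$-module homomorphisms and show that this space is one-dimensional over $\C$. Since $p\in\mathcal{B}_{\{p\}}^V$, we already have $\dim_\C \mathcal{B}_{\{p\}}^V \geq 1$, so it suffices to prove the reverse inequality. For any $g\in \mathcal{A}_O^m$, the map $\widetilde{\kappa}_g : \W_n^{1\times m}\to \mathcal{A}_O$, $o\mapsto o\bullet g$, is $\W_n$-linear, and by the definition of the VMPUM the condition $g\in \mathcal{B}_{\{p\}}^V$ is equivalent to $\ker(\kappa_p)\subseteq \ker(\widetilde{\kappa}_g)$. In that case $\widetilde{\kappa}_g$ descends to a $\W_n$-linear map $\overline{\kappa}_g : \W_n^{1\times m}/\ker(\kappa_p)\to \mathcal{A}_O$, and the correspondence $g \mapsto \overline{\kappa}_g$ is injective because evaluating $\overline{\kappa}_g$ at the class of the $i$-th unit row recovers $g_i$.

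The key structural step is the identification $\W_n^{1\times m}/\ker(\kappa_p)\cong A$ as left $\W_n$-modules, with isomorphism $[o]\mapsto o\bullet p$. Well-definedness and injectivity of the induced map follow from the definition of $\kappa_p$ together with Theorem~\ref{IsoOrePoly}. Surjectivity amounts to $\sum_i \W_n\cdot p_i = A$; since $p\neq 0$, some $p_i\in A$ is nonzero, and I would invoke the classical fact that $A=\C[t_1,\ldots,t_n]$ is a simple left $\W_n$-module. A direct argument: given $0\neq q\in A$, repeated application of the $\partial_i$'s lowers its multidegree and eventually produces a nonzero constant in $\W_n\cdot q$, from which multiplication by the monomials $t^\alpha$ sweeps out all of $A$.

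Combining the two steps, every $g\in \mathcal{B}_{\{p\}}^V$ yields a $\W_n$-linear map $\varphi_g : A\to \mathcal{A}_O$ satisfying $\varphi_g(p_i)=g_i$ for each $i$, and $g\mapsto \varphi_g$ is injective. It therefore suffices to show $\dim_\C \mathrm{Hom}_{\W_n}(A,\mathcal{A}_O) = 1$. Any such $\varphi$ is determined by $\varphi(1)$: from $\partial_i \bullet 1 = 0$ in $A$ we get $\partial_i\bullet \varphi(1) = 0$ in $\mathcal{A}_O = \mathcal{C}^\infty(\R^n,\C)$ for each $i$, and on the connected domain $\R^n$ this forces $\varphi(1)$ to be a constant $c\in\C$. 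Then $\W_n$-linearity gives $\varphi(t^\alpha) = t^\alpha \bullet \varphi(1) = c\, t^\alpha$, hence $\varphi(q)=c\, q$ for every polynomial $q$. Consequently $g_i = \varphi_g(p_i) = c\, p_i$ for all $i$, so $g = c\, p$ and $\mathcal{B}_{\{p\}}^V = \C\cdot p$.

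The main obstacle is the simplicity of $A$ as a $\W_n$-module; once this is accepted, the rest is bookkeeping. Lemma~\ref{KomCom} offers an alternative, more explicit route via the polynomial syzygies of $\Der_p$: the operators $\partial_j^{d_{ij}+1}e_i \in \ker(\kappa_p)$ already force any $g\in\mathcal{B}_{\{p\}}^V$ into the finite-dimensional box of polynomial tuples with $\deg_{t_j}g_i\leq d_{ij}$, and the remaining relations coming from $\Phi_p(\Syz(\Der_p))$ cut this space down to $\C\cdot p$. The $\mathrm{Hom}$-space argument above is, however, conceptually cleaner.
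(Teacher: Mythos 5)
Your proof is correct, but it is a genuinely different argument from the one in the paper. The paper proves Theorem~\ref{CDimension} by working with the commutative syzygy module $\Syz(\Der_p)$ via Lemma~\ref{KomCom}: it observes that $\Der_p$ contains a nonzero constant (some $\partial^{\alpha_{mh_m}}\bullet p_m = 1$), writes down the explicit syzygies $(0,\ldots,-1,\ldots,0,\tfrac{\partial^{|\alpha_{ij}|}p_i}{\partial^{\alpha_{ij}}})$ pinning every component of a solution vector to a scalar multiple of the corresponding derivative of $p$, and concludes that $\Sol(\Syz(\Der_p))$ is the line through $(p_1,\partial^{\alpha_{11}}\bullet p_1,\ldots)$. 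Your route instead uses the module-theoretic identification $\W_n^{1\times m}/\ker(\kappa_p)\cong A$ (which the paper records separately as Theorem~\ref{WeylCAlgebra}, and which you rederive using simplicity of $A$ as a $\W_n$-module), then observes that each $g$ in the VMPUM induces a $\W_n$-linear map $A\to\mathcal{A}_O$ and that $\mathrm{Hom}_{\W_n}(A,\mathcal{A}_O)$ is one-dimensional because any such map is determined by its value at $1$, which must be a constant on the connected domain $\R^n$. What the paper's argument buys is a concrete, entirely commutative computation that fits naturally after Lemma~\ref{KomCom} and requires no structural facts about $\W_n$-modules; what yours buys is conceptual transparency and a clean factorization through $\mathrm{Hom}_{\W_n}(A,\mathcal{A}_O)$ that makes it obvious where the hypotheses on $\mathcal{A}_O$ (connectedness, $A\subseteq\mathcal{A}_O$) enter, at the cost of invoking simplicity of $A$ — a fact the paper only uses in a later subsection. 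Your direct argument for that simplicity (repeated differentiation reaches a nonzero constant, then multiplication by $t^\alpha$ sweeps out $A$) is fine and keeps the proof self-contained. The only small point worth making explicit is that, a priori, $g\in\mathcal{B}^V_{\{p\}}$ is merely smooth, not polynomial; your argument handles this correctly since the conclusion $g_i = c\,p_i$ is derived rather than assumed, but it is worth stressing since it is precisely this step that fails for the $\MPUM$.
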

\begin{proof}
We use the notation of the Lemma \ref{KomCom}, which reduces to commutative calculations. It is easy to see that the equivalence  
\begin{align}\label{SolDModCom}
 s \bullet (f_1, \partial^{\alpha_{11}} \bullet f_1, \dots, f_m, \dots, \partial^{\alpha_{mh_m}} \bullet f_m)^T = 0 
\;\;\;\; \Leftrightarrow \;\;\;\; \Phi_p(s) \bullet f =0 
\end{align}
holds for every $ s \in \Syz(\Der_p) $. Now let us discuss the left hand side. More precisely, let us consider the solution space $\Sol(\Syz(\Der_p))$ in $\mathcal{A}_O$ belonging to $\Syz(\Der_p)$. Since $\Der_p$ contains 
all non-zero derivatives of $p_i$ for all $i$, there exists a non-zero constant ${\C} \ni k \in \Der_p$. 
We can suppose $k=1$ and without loss of generality let 
$\partial^{\alpha_{mh_m}} \bullet p_m=1$. Then 
$$(-1, 0, \dots, 0, p_1), \; \dots, \;
(0, \dots, 0, -1, \frac{\partial^{|\alpha_{mh_m}|}p_m}{\partial^{\alpha_{mh_m}}}) \in \Syz(\Der_p)$$
and thus 
\begin{align}\label{SolCom}
 \Sol(\Syz(\Der_p)) = \lbrace c \cdot (p_1, \frac{\partial^{|\alpha_{11}|}p_1}{\partial^{\alpha_{11}}}, \dots, \frac{\partial^{|\alpha_{mh_m}|}p_m}{\partial^{\alpha_{mh_m}}}) \; | \; c \in {\C} \rbrace.
\end{align}
Now suppose $f \in \VMPUM_p$. From Lemma \ref{KomCom} together with (\ref{SolDModCom}) and (\ref{SolCom}) we deduce the claim.
\end{proof}

\begin{remark}
 In the case of a single non-zero signal, the $\VMPUM$ gives the most precise description one can get for a linear system.
\end{remark}

\begin{ex}\label{wurf}
Consider the trajectory
$\omega(t)=v_0 t - v_1 t^2$, where $v_0,v_1\in{\C}\setminus\{0\}$.
Then the $\MPUM$ of $\omega$ is given by
\begin{align*}
 \mathcal{B}_{\{\omega\}} &= \{ \alpha (v_0t-v_1t^2)+\beta(v_0-2v_1t)+
\gamma (-2v_1) \ | \ \alpha, \beta, \gamma \in \R  \} \\
  & = \{ at^2 + bt +c \ | \ a, b, c \in \R \}\\
  & =  \{ w \in \mathcal{A}_O \ | \ \partial^3 \bullet w = 0 \}. 
\end{align*}
Thus there are three free parameters to choose. 
The $\VMPUM$ of $\omega$ is given by
\begin{align*}
 \mathcal{B}^V_{\{ \omega\}} & 
=\{ w \in \mathcal{A}_O \ | \  \left[ \begin{array}{c} -v_0^2 \partial^2+(4v_1^2 t- 2v_0v_1)\partial-8v_1^2  \\ \partial^3 \end{array} \right]  \bullet w = 0 \} \\
& = \{ c \, ( v_0 t - v_1 t^2 ) \ | \ c \in \R \},
\end{align*}
that is, two degrees of freedom vanish when we consider the time-variant model.
\end{ex}

\subsection{Structural properties}

Let us discuss some structural properties of the $\W_n$-module $\ker(\kappa_p)$.
Since every element of $\W_n$ can be transformed into normal form, 
the degree of an element $a\in\W_n$ can be introduced as
$$ \deg(a):= \max \lbrace \; \sum_{i=1}^n \alpha_i + \beta_i  \;\; | \;\; a = \sum_{\alpha, \beta \in \N_0^n} a_{\alpha, \beta} 
t^{\alpha} \partial^{\beta}, \; a_{\alpha, \beta} \in \C \rbrace. $$ 
Then $\mathcal{F}^i(\W_n):=\left\lbrace a \in \W_n \; | \; \deg(a)\leq i \right\rbrace $ induces a filtration 
on $\W_n$. The 
corresponding associated graded ring $\Gr(\W_n)$ is isomorphic to 
$\C[t_1, \dots, t_n, \!\partial_1, \dots, \partial_n]$ as a graded $\C$-algebra.
For every finitely generated $\W_n$-module $M$, 
we define the Hilbert polynomial 
$\HP_M^{\W_n}:=\HP_{\Gr(M)}^{\Gr(\W_n)}$. 
The dimension of $M$ is defined as $\dim_{\W_n}(M):=\deg(\HP_M)+1.$ 
Furthermore $M$ is called \textbf{holonomic} if it has dimension $n$. A holonomic module is of 
minimal dimension, since the dimension of $\W_n$-modules is bounded below by $n$ and bounded 
above by $2n$. Holonomic $\W_n$-modules are additionally cyclic and torsion modules. For details see \cite{C}.

As usual, we write $A=\C[t_1,\ldots,t_n]\subset \W_n$.

\begin{Th}\label{WeylCAlgebra}
There is an isomorphism of $\W_n$-modules
$ \W_n^{1\times m} / \ker(\kappa_p) \cong A.$
In particular, $\W_n^{1\times m} / \ker(\kappa_p)$ is simple holonomic $\W_n$-module.
\end{Th}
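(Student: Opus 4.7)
The plan is to interpret $\kappa_p$ via the identification from Theorem \ref{IsoOrePoly}, apply the first isomorphism theorem, and then reduce everything to the well-known fact that the polynomial ring $A = \C[t_1,\ldots,t_n]$ is a simple holonomic $\W_n$-module.

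First, I would combine $\kappa_p$ with the isomorphism $\W_n/{}_{\W_n}\langle \partial_1,\dots,\partial_n\rangle \cong A$ from Theorem \ref{IsoOrePoly} to view $\kappa_p$ as a $\W_n$-module homomorphism
\[
\kappa_p : \W_n^{1\times m} \longrightarrow A, \qquad [o_1,\dots,o_m]\mapsto \sum_{i=1}^m o_i\bullet p_i.
\]
By the homomorphism theorem, $\W_n^{1\times m}/\ker(\kappa_p) \cong \Bild(\kappa_p)$, and $\Bild(\kappa_p)$ is precisely the $\W_n$-submodule of $A$ generated by $p_1,\dots,p_m$.

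The crucial step is to establish that $A$ is a simple $\W_n$-module; once this is in hand, any nonzero cyclic submodule coincides with $A$, so $\Bild(\kappa_p) = A$ (using $p\neq 0$, which is implicit by analogy with Theorem~\ref{CDimension}). For simplicity, I would argue as follows: take any nonzero $q \in A$; since $q$ has finite total degree, a suitable monomial $\partial^\alpha$ in the $\partial_i$'s, matched to a leading term of $q$, yields $\partial^\alpha\bullet q \in \C^\times$. Hence $1$ lies in the submodule $\W_n\bullet q$, and then multiplying by arbitrary polynomials shows $\W_n\bullet q = A$. This gives both simplicity and the surjectivity of $\kappa_p$, so $\W_n^{1\times m}/\ker(\kappa_p)\cong A$.

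It remains to check holonomicity. Here I would use $A \cong \W_n/{}_{\W_n}\langle \partial_1,\dots,\partial_n\rangle$ (Theorem \ref{IsoOrePoly}) and compute the Hilbert polynomial with respect to the Bernstein filtration $\mathcal{F}^i(\W_n)$. The associated graded ring is $\Gr(\W_n) \cong \C[t_1,\dots,t_n,\partial_1,\dots,\partial_n]$, and the image of the left ideal ${}_{\W_n}\langle \partial_1,\dots,\partial_n\rangle$ generates the ideal $(\partial_1,\dots,\partial_n)$ in $\Gr(\W_n)$, whose quotient is $\C[t_1,\dots,t_n]$. The Hilbert polynomial of this graded $\C$-algebra has degree $n-1$, so $\dim_{\W_n}(A) = n$ and $A$ is holonomic.

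The only delicate point, and the one I would spend care on, is the simplicity argument: one must make sure that reducing an arbitrary nonzero polynomial to a constant via differentiation, followed by multiplication by polynomials to recover $A$, gives a rigorous proof that every nonzero $\W_n$-submodule of $A$ is all of $A$. The holonomicity computation is then a direct application of the standard Bernstein filtration machinery recalled just before the theorem.
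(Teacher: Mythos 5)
Your proof is correct and follows essentially the same route as the paper: apply the first isomorphism theorem to identify $\W_n^{1\times m}/\ker(\kappa_p)$ with a nonzero submodule of $A$, then invoke simplicity and holonomicity of $A$ as a $\W_n$-module. The only difference is that the paper cites these last two facts (they are standard, cf.\ Coutinho), whereas you supply direct arguments for them — the differentiate-to-a-constant argument for simplicity and the Bernstein-filtration Hilbert polynomial computation for holonomicity — both of which are sound.
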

\begin{proof}
 Since $\kappa_p$ is a homomorphism of $\W_n$-modules, we get
$$ \W_n^{1\times m} / \ker(\kappa_p) \cong \Bild(\kappa_p) \subseteq \W_n / {}_{\W_n} \langle \partial_1, 
\dots, \partial_n \rangle \cong A. $$ 
Thus $ \W_n^{1\times m} / \ker(\kappa_p) $ is isomorphic to a submodule of $A$. 
Due to the fact that $A$ is a simple holonomic $\W_n$-module and $ \W_n^{1\times m} / \ker(\kappa_p) \neq 0$ 
the claim follows.
\end{proof}

\begin{Corollary}\label{cyclic}
 Since $\W_n^{1\times m} / \ker(\kappa_p)$ is holonomic, there exists a left 
ideal $L_p$, depending on $p$, such that $\W_n^{1\times m} / \ker(\kappa_p) $ 
is isomorphic to the cyclic left $\W_n$-module $\W_n / L_p $. 
\end{Corollary}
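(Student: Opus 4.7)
The proof will essentially amount to translating the two quoted facts about holonomic modules---that they are cyclic, and that Theorem \ref{WeylCAlgebra} produces a holonomic quotient---into the desired presentation $\W_n/L_p$. The plan proceeds in three short steps.

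First, I would record the output of Theorem \ref{WeylCAlgebra}: the module $M := \W_n^{1\times m}/\ker(\kappa_p)$ is isomorphic to $A$ and, in particular, it is holonomic. The text quotes (from Coutinho) that holonomic $\W_n$-modules are cyclic, so $M$ admits a generator as a left $\W_n$-module. Thus the only substantive task remaining is to exhibit one such generator concretely enough to write down the associated annihilator.

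Second, I would produce the cyclic generator explicitly rather than invoking cyclicity abstractly. The map $\kappa_p : \W_n^{1\times m} \to A$ is surjective (by Theorem \ref{WeylCAlgebra}, its image is a non-zero submodule of the simple module $A$, hence equals $A$), so there exists $v \in \W_n^{1\times m}$ with $v \bullet p = 1 \in A$. I claim that the class $[v] \in M$ generates $M$. Indeed, for any $w \in \W_n^{1\times m}$, let $q := w \bullet p \in A$; since $\kappa_p$ is $\W_n$-linear and $\kappa_p(v) = 1$, we have $\kappa_p(q \cdot v) = q \bullet 1 = q = \kappa_p(w)$, so $w - q v \in \ker(\kappa_p)$ and $[w] = q \cdot [v] \in \W_n \cdot [v]$.

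Third, I would define
\[
L_p \;:=\; \ann_{\W_n}([v]) \;=\; \{ a \in \W_n \mid a v \in \ker(\kappa_p)\} \;=\; \{ a \in \W_n \mid a \bullet 1 = 0 \text{ in } A\},
\]
which is a left ideal of $\W_n$ (the last rewriting uses $\kappa_p(av) = a \bullet (v\bullet p) = a\bullet 1$). The map $\W_n \to M$, $a \mapsto a\cdot [v]$, is then a surjective left $\W_n$-module homomorphism (by the previous paragraph) with kernel $L_p$, so the homomorphism theorem yields $\W_n / L_p \cong M$, as required.

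The dependence of $L_p$ on $p$ enters through the choice of the preimage $v$ of $1$ under $\kappa_p$, which is the only step that is genuinely $p$-dependent. There is no real obstacle here: both the holonomicity and cyclicity ingredients are already in place, and the construction of $v$ is forced by surjectivity. The mildest subtlety is simply checking that $[v]$ is indeed cyclic in $M$, for which the computation in the second step is all that is needed.
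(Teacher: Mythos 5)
Your proof is correct and takes essentially the same approach as the paper, which leaves Corollary~\ref{cyclic} without a formal proof block and instead relies on the quoted fact that holonomic $\W_n$-modules are cyclic, remarking afterwards that since the module is simple (by Theorem~\ref{WeylCAlgebra}), any nonzero element serves as a cyclic generator. Your contribution is to make this explicit: you exhibit a concrete generator $[v]$ as a preimage of $1\in A$ under the surjective map $\kappa_p$, verify directly that it generates, and identify $L_p = \ann_{\W_n}([v]) = \{a \mid a \bullet 1 = 0\}$; this matches the paper's reasoning, just spelled out rather than cited.
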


An algorithm, using Gr\"obner bases, to compute a generator of a holonomic module
is given in \cite{L04}. On the other hand, since $\W_n^{1\times m} / \ker(\kappa_p)$ is simple holonomic module, any non-zero element can be taken as a generator for a cyclic presentation.

\begin{ex}
Suppose $\omega=[ c_1, \; c_2, \; c_3 ]^T$ for $c_1, c_2, c_3 \in \R\setminus\{0\}$. Then 
$$ 
\ker(\kappa_\omega)=_{\W_1}\langle[0,c_3,-c_2], [c_3,0,-c_1], [0,0,\partial]\rangle.
$$
Since
$$
\left[ \begin{array}{ccc} 
     0 & c_3 & -c_2 \\ c_3 & 0 & -c_1 \\ 0 & 0 & \partial
       \end{array}
\right]
\cdot
\underbrace{\left[ \begin{array}{ccc} 
0 & 1/c_3 & c_1/c_3 \\ 1/c_3 & 0 & c_2/c_3 \\ 0 & 0 & 1
       \end{array}
\right]}_{:=C}
=
\left[ \begin{array}{ccc} 
     1 & 0 & 0 \\ 0 & 1 & 0 \\ 0 & 0 & \partial
       \end{array}
\right], 
$$
we obtain
$
\W_1^3/\ker(\kappa_{\omega}) \cong \W_1^3/ \ker(\kappa_\omega) C \cong \W_1/ _{\W_1}\langle \partial \rangle \cong \C[t]
$.
\end{ex}



\subsection{VMPUM of polynomial-exponential signals}

In this section, we extend the signal space that should be modeled. The goal is to compute the $\VMPUM$ of 
\begin{eqnarray}\label{expvektor}
p=[p_1 \exp_{\lambda^1}, \; \dots, \; p_m \exp_{\lambda^m}]^T ,
\end{eqnarray}
where for all $1 \leq i \leq m$, we have $p_i \in A$, $\lambda^i \in \C^n$ and 
$$ \exp_{\lambda}:=\exp(\lambda_1 t_1 + \dots + \lambda_n t_n) \;\;\; \; \mbox{ for } \; \lambda \in \C^n.$$
By the action 
$\partial_j \bullet \exp_\lambda = \lambda_j \exp_\lambda$ for all $1 \leq j \leq n$
the space of polynomial-exponential functions becomes a $\W_n$-module. 

Consider the scalar setting first, that is, $m=1$. Define for $\lambda \in \C^n$ the 
$\W_n$-homomorphism
\begin{eqnarray*}
\sigma_{\lambda}: \W_n \rightarrow \W_n, \;\;\; \partial_i \mapsto (\partial_i - \lambda_i), \;\; t_i \mapsto t_i .
\end{eqnarray*}
It is easy to see that $\sigma_{\lambda}$ is a $\W_n$-automorphism. 
We claim that for $a \in \W_n$ and $f \in A$
\begin{eqnarray}\label{einD} 
a \bullet p =0 \;\;\; \mbox{ if and only if } \;\;\; \sigma_{\lambda}(a) \bullet (p \exp_{\lambda})=0.
\end{eqnarray}
For the proof suppose $a= \sum_i c_i t^{\alpha_i} \partial^{\beta_i}$. 

Using the identity 
$(\partial_i - \lambda_i) \bullet (p \exp_{\lambda}) = (\partial_i \bullet p)\exp_{\lambda} $, 
the claim follows by
\begin{eqnarray*}
\sigma_{\lambda}(a) \bullet (p \exp_{\lambda}) 
&=& \sum_i c_i t^{\alpha_i}(\, (\partial_1-\lambda_1)^{{\beta_i}_1} \cdots (\partial_n-\lambda_n)^{{\beta_i}_n}\,) \bullet (p \exp_{\lambda})\\
& = &\sum_i c_i t^{\alpha_i}(\, (\partial_1^{{\beta_i}_1} \cdots \partial_n^{{\beta_i}_n}\,) \bullet p) \, \exp_{\lambda}
= \quad (a\bullet p)\exp_{\lambda} .
\end{eqnarray*}

Extending the dimension, there are two special cases requiring attention. 
First suppose $\lambda^1, \dots, \lambda^m$ to be equal, that is, $p=[p_1, \dots, p_m]^{T} \exp_{\lambda}$, 
where $\lambda:=\lambda^1$. Then claim (\ref{einD}) can be generalized directly and it follows that
\begin{eqnarray}\label{mD}
\sum_{i=1}^m a_i \bullet (p_i \exp_{\lambda}) = 0  \; \mbox{ if and only if } \; [a_1, 
\dots, a_m] \in \sigma_{\lambda} ( \ker(\kappa_{p}) ) .
\end{eqnarray}

Assume now that $\lambda^1, \dots, \lambda^m$ are pairwise different. Then 
\begin{eqnarray}\label{pwverschieden}
\sum_{j=1}^m a_j \bullet (p_j \exp_{\lambda^j}) =0 \;\;\; \mbox{if and only if}\;\;\;[a_1, \dots, a_m] \in 
\bigoplus_{j=1}^m \sigma_{\lambda^j}(\,\ker(\kappa_{p_j})  \,).
\end{eqnarray}
Since $\exp_{\lambda^1}, \dots,\exp_{\lambda^m} $ are algebraically independent over $A$, the claim follows from
\begin{eqnarray*}
& &\sum_{j=1}^m a_j \bullet (p_j \exp_{\lambda^j}) = 0 \\  
&\Leftrightarrow& \sum_{j=1}^m \left( \sum_{i=1}^{h_j} c_{ji} t^{\alpha_{ji}} (\partial_1 + \lambda^j_1)^{(\beta_{ji})_1} \dots
 (\partial_n + \lambda^j_n)^{(\beta_{ji})_n} \bullet p_j \right) \exp_{\lambda^j} = 0 \\
&\Leftrightarrow&\sigma_{\lambda^j}^{-1}(a_j) \, \in \, \ker(\kappa_{p_j}) \;\; \mbox{ for all } \, 1 \leq j \leq m.
\end{eqnarray*}

Recapitulating we get:

\begin{Th}
Let $f$ be of the form (\ref{expvektor}). Further let 
\[ K_i:= \left\lbrace j \; | \; \lambda^j= \lambda^i \right\rbrace = \left\lbrace  k_{i1}, \dots, k_{i l_i}\right\rbrace \] 
and let $l$ be 
chosen minimal such that we have a disjoint union $K_1 \dot \cup \dots \dot \cup K_l = \left\lbrace k_{11}, \dots, k_{1h_1}, \dots, k_{l1}, \dots, k_{lh_l} \right\rbrace =\left\lbrace 1, \dots, m \right\rbrace $. Further define the vector $h_i:=[f_{k_{i1}}, \dots, f_{k_{il_i}}]^{T}$ and $H_i:=\sigma_{\lambda^i}( \ker(\kappa_{h_i}) )$. Let  $e_{k_{ij}}$ denote the $k_{ij}$-th canonical generator of $\W_n^{1 \times m}$ for $1\leq i \leq l$ and $1 \leq l \leq h_i$. Defining for $1 \leq i \leq l$,
\begin{eqnarray*}
\phi_i: H_i \rightarrow \W_n , \;\;\; [a_1, \dots, a_{h_i}] \mapsto \sum_{j=1}^{h_i} a_j e_{k_{ij}}, 
\end{eqnarray*}
the $\VMPUM$ of $f$ is given by $ \bigoplus_{i=1}^l \phi_i(H_i)$.
\end{Th}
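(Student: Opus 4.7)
The plan is to verify the submodule identity
$$\ker_{\W_n}(f) := \{a \in \W_n^{1\times m} \mid a \bullet f = 0\} = \bigoplus_{i=1}^l \phi_i(H_i),$$
after which the desired kernel representation of the VMPUM of the single vector signal $f$ follows from Theorem \ref{vmpum} applied to $f \in \mathcal{A}_O^m$.

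First I regroup the coordinates of an arbitrary $a = [a_1, \dots, a_m] \in \W_n^{1\times m}$ according to the partition $\{1, \dots, m\} = K_1 \dot \cup \dots \dot \cup K_l$. Setting $a^{(i)} := (a_{k_{i1}}, \dots, a_{k_{il_i}}) \in \W_n^{1\times l_i}$ yields the decomposition $a = \sum_{i=1}^l \phi_i(a^{(i)})$. Because $\sigma_{\lambda^i}$ is a $\C$-algebra automorphism of $\W_n$, applied componentwise it produces a unique $b^{(i)} \in \W_n^{1\times l_i}$ with $a^{(i)} = \sigma_{\lambda^i}(b^{(i)})$.

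Next I apply the scalar identity $\sigma_\lambda(a)\bullet(p\exp_\lambda) = (a \bullet p)\exp_\lambda$ (established in the derivation of (\ref{einD})) componentwise inside each group $K_i$ and then sum over $i$. Writing $h_i = (h_{i,1}, \dots, h_{i,l_i})$ for the vector of polynomial parts in group $K_i$, I obtain
\begin{align*}
a \bullet f
&= \sum_{i=1}^l \sum_{j=1}^{l_i} \sigma_{\lambda^i}(b^{(i)}_j) \bullet (h_{i,j}\exp_{\lambda^i}) \\
&= \sum_{i=1}^l \bigl(b^{(i)} \bullet h_i\bigr)\exp_{\lambda^i},
\end{align*}
and each $b^{(i)} \bullet h_i$ lies in $A$.

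The decisive ingredient is the algebraic independence over $A$ of the exponentials $\exp_{\lambda^1}, \dots, \exp_{\lambda^l}$, which holds because the $\lambda^i$ are by construction pairwise distinct vectors of $\C^n$. Hence $a \bullet f = 0$ if and only if $b^{(i)} \bullet h_i = 0$ for every $i$, i.e.\ $b^{(i)} \in \ker(\kappa_{h_i})$, equivalently $a^{(i)} \in \sigma_{\lambda^i}(\ker(\kappa_{h_i})) = H_i$. Since the index sets $K_i$ are pairwise disjoint, the images $\phi_i(H_i) \subseteq \W_n^{1\times m}$ intersect only in zero, so the sum is direct; this gives $\ker_{\W_n}(f) = \bigoplus_{i=1}^l \phi_i(H_i)$. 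The only nontrivial step is the algebraic independence of distinct exponentials; everything else is bookkeeping that uniformly glues the equal-exponent vector identity (\ref{mD}) with the pairwise-distinct scalar identity (\ref{pwverschieden}).
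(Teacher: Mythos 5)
Your proof is correct and takes the same route as the paper: it combines the scalar identity (\ref{einD}), the equal-exponent case (\ref{mD}) within each group $K_i$, and the $A$-linear (indeed algebraic) independence of the distinct exponentials across groups as in (\ref{pwverschieden}), exactly as the paper's terse "After choosing a suitable projection, the claim follows by (\ref{einD}) and (\ref{mD})" intends. You merely spell out the change of variables $a^{(i)} = \sigma_{\lambda^i}(b^{(i)})$ and the regrouping that the paper leaves implicit.
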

\begin{proof}
After choosing a suitable projection, the claim follows by (\ref{einD}) and (\ref{mD}).
\end{proof}

\section{VMPUM via the polynomial difference algebra}

Suppose that $|K|=\infty$. 
Recall the definition of the $n$-th difference algebra:
$$ 
\mathcal{S}_n:=K[t_1, \dots, t_n][\Delta_1; \sigma_1, \delta_1] \cdots [\Delta_n; \sigma_n, \delta_n].
$$
For $p\in K[t_1, \dots, t_n]$, we have
$$
\Delta_i \bullet p = \delta_i(p)=\sigma_i(p)-p=p(t+e_i)-p(t).
$$

Further suppose that $\mathcal{A}_O=K^{\N_0^n}$. 
Identifying a polynomial with the corresponding polynomial function, we
obtain $A \subseteq \mathcal{A}_O$.

Similar to the continuous case, the kernel of $\kappa_p$ can be 
computed in a completely 
commutative framework. For this we choose a special representation 
of the polynomials 
that is adapted to the action of $\Delta$, see \cite{Eva2}.
For $t \in \N_0^n$ and $\nu=(\nu_1, \dots, \nu_n)$, we consider 
the binomial functions
$$ p_{\nu}: \N_0^n \rightarrow K, \;\;\; t \mapsto  \binom{ t_1 }{ \nu_1 }
\cdots \binom { t_n }{ \nu_n  },$$
where $\binom{t_i }{ 0 } =1$ for all $i$. 
Then 
$ \nu! \, p_{\nu}=t_1 \cdots (t_1-\nu_1+1)  \cdots  t_n \cdots (t_n-\nu_n+1)$
and moreover, each element $p \in A^m$ can be written as

\begin{eqnarray}
p = \sum_{\nu \in \N_0^n, \nu \leq_{cw} \varrho} c_{\nu} p_{\nu} \label{darstellungvonp} 
\end{eqnarray}
for $\varrho \in \N_0^n$, some suitable 
coefficient vectors $c_{\nu} \in K^m$ and 
$\leq_{cw}$ denoting the component-wise order on ${\N}_0^n$, 
that is, $\nu_i \leq \varrho_i$ 
for all $1 \leq i \leq n$. 
Let us describe how to find this representation. We restrict to the 
scalar and one-dimensional case, 
where $m=n=1$. The general case can be treated similarly. 
For $p \in A=K[t]$ we show how to find the introduced 
representation. Usually, a 
polynomial $p$ is given in the form 
$$p(x)=d_v t^v + d_{v-1} t^{v-1} + \dots + d_{1}t + d_0, \;\; \mbox{ where } d_i \in K.$$
To write $p$ in the form (\ref{darstellungvonp}), 
the occurring coefficients $c_{\nu}$ 
have to be determined. We will show how this can be done for a 
monomial $d_v t^v$. Since $\nu! p_{\nu} = t \cdot (t-1) \cdots (t-\nu+1)$, 
we define 
$$ g^{(\nu)}:=
t\cdot (t-1) \cdots (t-\nu+1) = t^{\nu} + g_{\nu-1}^{(\nu)} t^{\nu-1} + \cdots + g_1^{(\nu)}t.$$
First, the coefficients $g_{v}^{(\nu)}$ will be determined for $1 \leq v \leq \nu$ by 
using the fact that $g^{(\nu)}=g^{(\nu-1)} \cdot (t - \nu +1 )$.
\begin{enumerate} 
\item Determine $g_1^{(\nu)}$:\\
The polynomial $g^{(\nu)}$ is a multiple of $t$.
Recursively, one gets that 
$$g_1^{(\nu)} = 
\left\lbrace  \begin{array}{cc}
1 & \mbox{ for } \nu = 1\\
(-1)^{\nu - 1}  \prod_{k=1}^{\nu-1} k & \mbox{ for } \; \nu > 1. 
\end{array} \right.
$$  
\item Determine $g_2^{(\nu)}$:\\
Using $g^{(\nu)}= g^{(\nu-1)} \cdot (t\nu+1)$, we get
\begin{eqnarray*}
g_2^{(\nu)}& =& g_1^{(\nu-1)} - (\nu-1) \cdot g_2^{(\nu-1)} 
= (-1)^{\nu - 2} \prod_{k=1}^{\nu-2} k     \; - \, (\nu-1) g_2^{(\nu-1)}
\end{eqnarray*}
Since $g_2^{(2)}=1$, we get a recursive formula.
\item Determine $g_j^{(\nu)}$ for $j \leq \nu$:\\
A similar consideration as in the previous point yields
$$ g_j^{(\nu)}=g_{j-1}^{(\nu-1)} - (\nu-1) \cdot g_j^{(\nu-1)}.$$
\end{enumerate}
Finally, we observe 
\begin{eqnarray*}
d_v t^v &=& d_v \left(  g(v) -g_{v-1}^{(v)} \cdot g(v-1) - (g_{v-2}^{(v)}- 
g_{v-1}^{(v)}\cdot g_{v-2}^{(v-1)} )g(v-2) - \cdots \right)     \\
&=& d_v \left( g(v) + \sum_{i=1}^{v-1}k_v(i) \cdot g(v-i) \right)  
= d_v \left( v! p_v + \sum_{i=1}^{v-1}k_v(i) \cdot (v-i)! \cdot p_{v-i} \right) ,
\end{eqnarray*}
where
$$k_v(1):=-g_{v-1}^{(v)}, \;\; \mbox{ and } \;\; 
k_v(l)=\left\lbrace \begin{array}{ll} -g_{v-l}^{(v)}+\sum_{i=1}^{l-1}k_v(i) \cdot g_{v-l}^{(v-i)},  & \mbox{ if } l<v\\
                                       0, & \mbox{ if } l \geq v.
                                        
                 \end{array} \right. $$
Consider for example $p(t)=t^3+t^2+1$. The bounding value $\varrho$ equals three, so by using
\begin{center}
\begin{tabular}{|c|c|c|c|c|c|}
\hline
$j$ & $p_1^{(j)}$ & $p_2^{(j)}$ & $p_3^{(j)}$ & $k_3(j)$ & $k_2(j)$\\
\hline
1 & 1 & 0 & 0 &  3 & 1\\
\hline 
2 & -1 & 1 & 0 & 1 & 0\\
\hline
3 & 2 & -3 & 1 & 0 & 0\\
\hline
\end{tabular}
\end{center}
we finally get
\begin{eqnarray*}
t^3 & = & p(3) + k_3(1) \cdot p(2) + k_3(2) \cdot p(1) = 6 \cdot p_3 + 3 \cdot 2 \cdot p_2 + 1 \cdot p_1 \\
t^2 & = & p(2) + k_2(1) \cdot p(1) = 2 \cdot p_2 + 1 \cdot p_1\\
1   & = & p_0 \\
\Rightarrow \;  p(t) &=& 6 \cdot p_3 + 8 \cdot p_2 + 2 \cdot p_1 + p_0 
\end{eqnarray*}
 
In the following we show the advantage of this notation. Since 
\begin{eqnarray*}
(\delta_i p_{\nu_i})(t_i) & = & \binom{t_i+1}{ \nu_i } - \binom{ t_i }{ \nu_i }\\
& = & \left\lbrace
\begin{array}{cc}
\frac{((t_i+1)-(t_i-\nu_i+1)) \, (t_i \, \cdots \, (t_i-\nu_i+2))}{\nu_i!} & \mbox{ if } \nu_i \geq 1 \\ 
0 & \mbox{ if } \nu_i =0 \end{array} \right. 
\\
& = & \left\lbrace 
\begin{array}{cc} \binom{ t_i}{  \nu_i-1} & \mbox{ if } \nu_i \geq 1 \\ 0 & \mbox{ if } 
\nu_i =0,  \end{array}  \right. 
\end{eqnarray*}
one gets, by using the fact that $\delta^{\mu}p_{\nu}=\delta_1^{\mu_1}p_{\nu_1} \cdots 
\delta_m^{\mu_n}p_{\nu_n}$, the equality 
\begin{eqnarray}\label{shifttozero} \delta^{\mu}p_{\nu} &=& \left\lbrace 
\begin{array}{cc} p_{\nu - \mu} & \mbox{ if } \mu \leq_{cw} \nu  \\ 
0 & \mbox{ otherwise.} \end{array} \right. \label{zero} \end{eqnarray}

\begin{remark}\label{BoundingIndex}
Let $p=(p_1, \dots, p_m)^T  \in A^m$ with $p_i(x)=a_{d_ii}t^{\mu_{di}} + \dots + a_{1i}t^{\mu_{1i}}$, 
using multi-index notation. Define $$\varrho_i=\max_{cw} \left\lbrace (v_1, \dots, v_n) \, \in 
\N^n_0 \; | \; v_j=(\mu_{ki})_j \mbox{ for } 1\leq k \leq d_i \right\rbrace.$$ Then the bounding 
multi-index $\varrho$ belonging to the binomial representation (\ref{darstellungvonp}) is given by
$$ \varrho=\max_{cw}\left\lbrace (v_1, \dots, v_n) \; | \; v_i=(\varrho_j)_i \mbox{ for } 1 \leq j \leq m \right\rbrace.$$
\end{remark}

From now on suppose that $p = \sum_{\nu \in \N^n_0, \, \nu \leq_{cw} \varrho} c_\nu p_\nu$.

\begin{remark}\label{ShiftVmpumfinitelygenerated}
Connecting remark \ref{BoundingIndex} and (\ref{shifttozero}) we get that $\delta^\mu p = 0$ 
for all $\mu$ with $\mu_i > \varrho_i $ for at least one $1 \leq i \leq n$. Now consider the finitely 
generated left $A$-module generated by
$$ \Shift_p={}_{A}\langle \delta^\mu p \; | \; \mu \leq_{cw} \varrho   \rangle .$$
The corresponding syzygy module $\Syz( \Shift_p)$ is finitely generated too, 
since $A$ is a Noetherian ring. Analogously to the continuous case, 
we can give an 
$A$-module homomorphism from $\Syz( \Shift_p)$ to $\ker(\kappa_p)$, 
such that the image of $s_1, \dots, s_d$ under this map  
generates $\ker(\kappa_p)$, that is, $\ker(\kappa_p)$ is 
finitely generated as an $A$-module. 
This implies that $\ker(\kappa_p)$ is finitely generated as an
$\mathcal{S}_n$-module.
\end{remark}

\begin{ex}
\label{exKerP}
 Let $p=[t^3, t]^{T}$. 
Then the continuous $\VMPUM$ is the same as the discrete $\VMPUM$, that is, 
equal to $\{ c [t^3, t]^{T} \mid c\in K\}$. Direct computation over $\mathcal{S}_1$ yields 
$$ \ker_{\mathcal{S}_1}(\kappa_p) = {}_{\mathcal{S}_1}\langle [0, \Delta^2], \; [0, t\Delta-1], \;  [1, -t^2] \rangle$$
and this means that 
$$
\left[ 
\begin{array}{cc}
0 & \Delta^2 \\ 0 & t\Delta-1 \\ 1 & -t^2
\end{array}
\right] 
$$
is a kernel representation of the $\VMPUM$ of $p$. 
Note that over $\mathcal{A}_1$,
we have
$$ \ker_{\mathcal{A}_1}(\kappa_p) = {}_{\mathcal{A}_1}\langle [0, \d^2], \; [0, t\d -1], \;  [1, -t^2] \rangle.$$

Alternatively, we can compute $ \ker_{\mathcal{S}_1}(\kappa_p)$ in the commutative framework, using the analogue of ``difference algebra'' approach. At first, we observe that
$$ 
\Shift_{ [t^3, t]^{T}}= {}_{K[t]} \langle t^3, 3t^2+3t+1, 6t+6, 6, t, 1 \rangle
$$
so 
\begin{footnotesize}
$$
\Syz(\Shift_{ [t^3, t]^{T}})= {}_{K[t]} \langle  
\left[ \begin{array}{c}
0\\0\\0\\0\\1\\-t
\end{array}
\right] ,\;
\left[ \begin{array}{c}
0\\0\\0\\1\\0\\-6
\end{array}
\right] ,\;
\left[ \begin{array}{c}
0\\0\\1\\0\\0\\-6t-6
\end{array}
\right] ,\;
\left[ \begin{array}{c}
0\\1\\0\\0\\0\\-3t^2-3t-1
\end{array}
\right] ,\;
\left[ \begin{array}{c}
1\\0\\0\\0\\0\\-t^3
\end{array}
\right]
\rangle .
$$
\end{footnotesize}

Finally we get that 
$ \ker(\kappa_p) 
 = {}_{\mathcal{S}_1} \langle[0,-t \Delta+1],\;[\Delta^3, -6 \Delta], 
\;[\Delta^2, (-6t-6)\Delta], \;[\Delta,(-3t^2-3t-1)\Delta], \;[1,-t^3\Delta], \;[\Delta^4,0], \;[0,\Delta^2] \rangle 
 = {}_{\mathcal{S}_1}\langle [0, \Delta^2], \; [0, t\Delta-1], \;  [1, -t^2] \rangle.$
\end{ex}

\subsection{VMPUM of polynomial-exponential signals}

For  $ \lambda = ( \lambda_1, \dots, \lambda_n ) \in K^n$ the discrete exponential function is given by
$$
 \exp_{\lambda}: \N_0^n \rightarrow K, \;\; t \mapsto \lambda^t= \lambda_1^{t_1} \cdots \lambda_n^{t_n}.
$$

First suppose that $m=1$, that is, 
we want to construct the $\VMPUM$ of a scalar polynomial exponential 
trajectory of the form $p \exp_\lambda$, where $p \in A$. 
Without loss of generality, we can assume 
$\lambda_i \neq 0 $ for all $1 \leq i \leq n$, since otherwise if $\lambda_j=0$
$$
p \exp_{\lambda} (t) = \left\lbrace 
\begin{array}{cc} 0 & \mbox{ if } t_j \neq 0 \\ g & \mbox{ if } t_j = 0 \end{array}, \;\; \mbox{ where }\right. 
$$
$$
g:\N^{n-1} \rightarrow K, \;\;\;\; t \mapsto  (p \exp_{\lambda}) ( t_1, \dots, t_{i-1}, 0, t_{i+1}, \dots, t_n).
$$ 
 Consider the automorphism of $\mathcal{S}_n$
$$
\chi_{\lambda} : \mathcal{S}_n \rightarrow \mathcal{S}_n, \;\; 
\left\lbrace 
\begin{array}{c} 
 t_i \mapsto t_i \\
 \Delta_i \mapsto \frac{1}{\lambda_i} (\Delta_i - \lambda_i + 1).
\end{array}
\right. \;\;
$$

Since the equality
\begin{align*}
 \chi_{\lambda}(\Delta_i) \bullet (p \exp_\lambda) & = \frac{1}{\lambda_i}(\Delta_i - \lambda_i + 1) \bullet (p \exp_\lambda) \\
 &= \frac{1}{\lambda_i} \left( \lambda_i \exp_\lambda \sigma_i(p) - p \exp_\lambda  - \lambda_i p \exp_\lambda + p \exp_\lambda \right) \\
 & = \frac{1}{\lambda_i} \left( \lambda_i \exp_\lambda ( \sigma_i(p) - p ) \right) 
 = \exp_\lambda \Delta_i \bullet  p
\end{align*}
holds, we obtain the identity
$$  
\chi_{\lambda}(\Delta_i^k) \bullet (p \exp_\lambda) = \exp_\lambda \Delta_i^k \bullet  p
$$
that finally extends to
\begin{align}\label{shiftexp}
\chi_{\lambda}(\Delta^\mu) \bullet (p \exp_\lambda) = \chi_{\lambda}^\mu(\Delta) \bullet (p \exp_\lambda) = \exp_\lambda \Delta^\mu \bullet  p.
\end{align}

Now using (\ref{shiftexp}) we can deduce for $a = \sum_{i=1}^h a_i \Delta^{\alpha_i} \in \mathcal{S}_n$ 
the equivalence
\begin{align}\label{shifteinD}
 a \bullet p = 0 \;\;\; \Leftrightarrow \;\;\; \chi_\lambda(a) \bullet (\exp_\lambda p) = 0,
\end{align}
since
\begin{align*}
 \chi_\lambda(a) \bullet (\exp_\lambda p)  & = \sum_{i=1}^h a_i \chi_\lambda(\Delta^{\alpha_i} )\bullet (\exp_\lambda p)
 = \sum_{i=1}^h a_i (\Delta^{\alpha_i} \bullet p) \exp_\lambda \\
& =\exp_\lambda  \sum_{i=1}^h a_i (\Delta^{\alpha_i} \bullet p) =\exp_\lambda a \bullet p .
\end{align*}

Summarizing, we obtain
\begin{Th}
 Let $R \in \mathcal{S}_n^{l \times 1}$ be a kernel representation matrix of the $\VMPUM$ of $p$. Then the kernel representation matrix of $p \exp_\lambda$ is given by $(\chi_\lambda(R_{i}))_{i}$.
\end{Th}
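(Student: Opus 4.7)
The plan is to deploy the equivalence (\ref{shifteinD}) together with the automorphism property of $\chi_\lambda$ to transport the kernel representation of the $\VMPUM$ of $p$ over to one for $p\exp_\lambda$. By Theorem \ref{vmpum}, the rows $R_1,\dots,R_l$ of $R$ generate the left ideal $\ker(\kappa_p)\subseteq\mathcal{S}_n$. The goal is therefore to identify a generating set of $\ker(\kappa_{p\exp_\lambda})$ obtained from the $R_i$.

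First, I would rephrase (\ref{shifteinD}) as the equality of left ideals
\[
\chi_\lambda(\ker(\kappa_p)) \;=\; \ker(\kappa_{p\exp_\lambda}).
\]
The inclusion $\subseteq$ is immediate from the forward implication of (\ref{shifteinD}). For $\supseteq$, I would use that $\chi_\lambda$ is an automorphism of $\mathcal{S}_n$: any $b\in\ker(\kappa_{p\exp_\lambda})$ can be written as $b=\chi_\lambda(a)$ for a unique $a\in\mathcal{S}_n$, and the backward implication of (\ref{shifteinD}) then yields $a\in\ker(\kappa_p)$, hence $b\in\chi_\lambda(\ker(\kappa_p))$.

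Second, since $\chi_\lambda$ is a $K$-algebra automorphism, it carries any generating set of a left ideal to a generating set of the image ideal: from $\chi_\lambda(\sum_i b_i R_i)=\sum_i \chi_\lambda(b_i)\chi_\lambda(R_i)$ together with the bijectivity of $\chi_\lambda$ one reads off that $\chi_\lambda(\ker(\kappa_p))$ is generated, as a left ideal, by $\chi_\lambda(R_1),\dots,\chi_\lambda(R_l)$. Combined with the first step, the column vector $(\chi_\lambda(R_i))_i\in\mathcal{S}_n^{l\times 1}$ generates $\ker(\kappa_{p\exp_\lambda})$ and therefore, by Theorem \ref{vmpum} applied to the signal $p\exp_\lambda$, furnishes the claimed kernel representation of its $\VMPUM$.

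There is no real obstacle here; the substantive content has already been packaged into the intertwining identity (\ref{shifteinD}). The one point worth double-checking is that surjectivity (not merely injectivity) of $\chi_\lambda$ is genuinely used, both for the $\supseteq$ inclusion of ideals and for the translation of the generating set; both are fine because $\chi_\lambda$ was established to be an automorphism earlier in the section.
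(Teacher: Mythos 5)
Your proof is correct and follows essentially the same route as the paper, which simply states the theorem after deriving equivalence (\ref{shifteinD}) with the remark ``Summarizing, we obtain'' and gives no further argument; you have merely spelled out the two standard facts the paper leaves implicit, namely that (\ref{shifteinD}) together with surjectivity of $\chi_\lambda$ yields $\chi_\lambda(\ker(\kappa_p))=\ker(\kappa_{p\exp_\lambda})$ as left ideals, and that an algebra automorphism carries a generating set of a left ideal to a generating set of its image.
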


Now consider 
\begin{align}\label{shiftexpvektor}
p = \left[ \begin{array}{c} p_1 \exp_{\lambda^{(1)}} \\ 
\vdots \\ p_m \exp_{\lambda^{(m)}} \end{array}\right] , \;\; 
\mbox{ where } \;  \lambda^{(i)}  \in K^n \setminus \left\lbrace 0\right\rbrace,  \; \mbox{ and } \; p_i \in A
\end{align}
and suppose $\lambda^{(1)}, \dots, \lambda^{(m)}$ to be pairwise different and without loss of generality 
$\lambda^{(i)}_j \neq 0$ for all $1 \leq i \leq m$ and  $1\leq j\leq n$. Then 
\begin{align}\label{shiftmD}
\sum_{j=1}^m a_j \bullet (p_j \exp_{\lambda^{(j)}}) =0 \;\;\; \mbox{if and only if}\;\;\;[a_1, \dots, a_m] \in 
\bigoplus_{j=1}^m \chi_{\lambda^{(j)}}(\,\ker(\kappa_{p_j})  \,),
\end{align}
which follows by
$$\sum_{j=1}^m a_j \bullet (p_j \exp_{\lambda^{(j)}}) = 0 
\; \Leftrightarrow \; \sum_{j=1}^m \left(  \sum_{i=1}^{h_j} c_{ji} t^{\alpha_{ji}} \Delta^{\beta_{ji}} \bullet (p_j \exp_{\lambda^{(j)}})\right)  = 0 $$
$$\Leftrightarrow   \sum_{j=1}^m (\chi_{\lambda^{(j)}}^{-1}(a_j) \bullet p_j) \exp_{\lambda^{(j)}} =0 
\Leftrightarrow  \chi_{\lambda^{(j)}}^{-1}(a_j) \, \in \, \ker(\kappa_{p_j}) \;\; \mbox{ for all } \, 1 \leq j \leq m.$$

Choosing a suitable projection, we obtain by (\ref{shifteinD}) and (\ref{shiftmD})
\begin{Th}
Let $p$ be of the form (\ref{shiftexpvektor}). Further let 
$K_i:= \left\lbrace j \; | \; \lambda^j= \lambda^i \right\rbrace = 
\left\lbrace  k_{i1}, \dots, k_{i l_i}\right\rbrace $ and $l$ chosen minimal such that the disjoint union 
$K_1 \dot \cup \dots \dot \cup K_l = \left\lbrace k_{11}, \dots, k_{1h_1}, \dots, 
k_{l1}, \dots, k_{lh_l} \right\rbrace =\left\lbrace 1, \dots, m \right\rbrace $. Further define the vector 
$h_i:=[f_{k_{i1}}, \dots, f_{k_{il_i}}]^{T}$ and $H_i:=\chi_{\lambda^{(i)}}( \ker(\kappa_{h_i}) )$. 
Let  $e_{k_{ij}}$ denote the $k_{ij}$-th standard generator of $\mathcal{S}_n^{1 \times m}$ for 
$1\leq i \leq l$ and $1 \leq l \leq h_i$. Defining for $1 \leq i \leq l$
\begin{align*}
\phi_i: H_i \rightarrow \mathcal{S}_n , \;\;\; [a_1, \dots, a_{h_i}] \mapsto \sum_{j=1}^{h_i} a_j e_{k_{ij}} 
\end{align*}
the $\VMPUM$ of $p$ is given by
$ \;\;\bigoplus_{i=1}^l \phi_i(H_i)$. 
\end{Th}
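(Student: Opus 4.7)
The plan is to reduce the mixed-exponential case to two tools that have already been assembled in this section: a multi-component version of the single-exponential identity (\ref{shifteinD}), and the algebraic independence of distinct discrete exponentials that was already invoked in deriving (\ref{shiftmD}). For an arbitrary row $a=[a_1,\ldots,a_m]\in\mathcal{S}_n^{1\times m}$ I would first regroup the action along the partition $K_1 \dot\cup \cdots \dot\cup K_l$:
\[
a\bullet p \;=\; \sum_{j=1}^m a_j\bullet(p_j\exp_{\lambda^{(j)}})
\;=\; \sum_{i=1}^l \; \sum_{j\in K_i} a_j\bullet(p_j\exp_{\lambda^{(i)}})
\;=\; \sum_{i=1}^l b_i\bullet(h_i\exp_{\lambda^{(i)}}),
\]
where $b_i:=[a_{k_{i1}},\ldots,a_{k_{il_i}}]\in\mathcal{S}_n^{1\times l_i}$ is the sub-row of $a$ indexed by $K_i$.

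Next I would promote (\ref{shifteinD}) to the vector setting inside each block. Since $\chi_{\lambda^{(i)}}$ is a $K$-algebra automorphism of $\mathcal{S}_n$ fixing $A$, applying (\ref{shiftexp}) componentwise gives the identity $b_i\bullet(h_i\exp_{\lambda^{(i)}}) = \exp_{\lambda^{(i)}}\cdot(c_i\bullet h_i)$ with $c_i:=\chi_{\lambda^{(i)}}^{-1}(b_i)$ (the inverse applied componentwise) and $c_i\bullet h_i\in A$. Substituting yields
\[
a\bullet p \;=\; \sum_{i=1}^l \exp_{\lambda^{(i)}}\cdot(c_i\bullet h_i).
\]
Because $\lambda^{(1)},\ldots,\lambda^{(l)}$ are pairwise distinct with only nonzero coordinates and $|K|=\infty$, the discrete exponentials $\exp_{\lambda^{(i)}}:\N_0^n\to K$ are algebraically, in particular $A$-linearly, independent; this is precisely the ingredient that was used to pass from a joint equation to componentwise equations in (\ref{shiftmD}). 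Consequently $a\bullet p=0$ iff $c_i\bullet h_i=0$ for every $i$, iff $c_i\in\ker(\kappa_{h_i})$, iff $b_i\in\chi_{\lambda^{(i)}}(\ker(\kappa_{h_i}))=H_i$.

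To finish, I would observe that since $K_1,\ldots,K_l$ form a disjoint partition of $\{1,\ldots,m\}$, the maps $\phi_i$ send $H_i$ into pairwise disjoint blocks of coordinates of $\mathcal{S}_n^{1\times m}$. Hence the combined condition ``$b_i\in H_i$ for every $i$'' is exactly the condition that $a$ lies in the internal direct sum $\bigoplus_{i=1}^l\phi_i(H_i)$. This yields $\ker(\kappa_p)=\bigoplus_{i=1}^l\phi_i(H_i)$, and the description of the $\VMPUM$ of $\{p\}$ then follows from the vector generalization of Theorem~\ref{vmpum}.

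The main obstacle I anticipate is bookkeeping rather than conceptual: one must carefully define the componentwise extension of $\chi_{\lambda^{(i)}}$ to the free modules $\mathcal{S}_n^{1\times l_i}$ and keep the global positions $k_{ij}\in\{1,\ldots,m\}$ aligned with the local indices inside each block vector $h_i$ via $\phi_i$, so that the ``suitable projection'' mentioned at the end of the theorem is indeed the coordinate projection $a\mapsto b_i$ that matches $\phi_i$.
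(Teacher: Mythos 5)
Your proposal is correct and follows essentially the same route as the paper: the paper's one-line proof simply appeals to (\ref{shifteinD}) and (\ref{shiftmD}) together with ``a suitable projection,'' and your argument spells out exactly that combination — partition by equal $\lambda^{(i)}$, promote the single-exponential identity (\ref{shifteinD}) to a vector version within each block, then separate blocks using the $A$-linear independence of the discrete exponentials (the same independence that underlies (\ref{shiftmD})), matching the resulting block conditions $b_i\in H_i$ to the internal direct sum $\bigoplus_i\phi_i(H_i)$ and invoking Theorem~\ref{vmpum} at the end.
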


\section*{Conclusion} 
Generalizing ideas from systems theory,
we have defined a ``varying most powerful unfalsified model'' ($\VMPUM$)
over polynomial Ore algebras such as the Weyl algebra or the
difference algebra. Mathematically, this amounts to computing
kernels of module homomorphisms over these algebras. On the
one hand, this can be achieved using Gr\"obner bases techniques, 
and on the other,
by translating the problem to an associated syzygy computation 
over a commutative polynomial ring, thus mimicking ideas of differential
algebra.
We have also studied some structural properties of the resulting models,
and we have seen, in terms of examples, 
that models with polynomial coefficients provide
a much better (and more precise) description of the data than models with constant
coefficients. 
Further future work concerns, for instance, a characterization of the
vector space dimension of the $\VMPUM$ of several trajectories,
thus generalizing Theorem~\ref{CDimension}. 
Let $p=[p_1, \dots, p_m]$ consist of $\C$-linear independent signals.
We conjecture that $\dim_\C(\VMPUM(p))=m$. Moreover, it seems possible to us to develop $\VMPUM$ with polynomial coefficients for data, represented by rational and by rational-exponential functions.

\end{document}